%2multibyte Version: 5.50.0.2960 CodePage: 1252

\documentclass[12pt]{article}
%%%%%%%%%%%%%%%%%%%%%%%%%%%%%%%%%%%%%%%%%%%%%%%%%%%%%%%%%%%%%%%%%%%%%%%%%%%%%%%%%%%%%%%%%%%%%%%%%%%%%%%%%%%%%%%%%%%%%%%%%%%%%%%%%%%%%%%%%%%%%%%%%%%%%%%%%%%%%%%%%%%%%%%%%%%%%%%%%%%%%%%%%%%%%%%%%%%%%%%%%%%%%%%%%%%%%%%%%%%%%%%%%%%%%%%%%%%%%%%%%%%%%%%%%%%%
\usepackage{amssymb}
\usepackage{amsmath}
\usepackage{endnotes}
\usepackage[doublespacing]{setspace}

\setcounter{MaxMatrixCols}{10}
%TCIDATA{OutputFilter=LATEX.DLL}
%TCIDATA{Version=5.50.0.2960}
%TCIDATA{Codepage=1252}
%TCIDATA{<META NAME="SaveForMode" CONTENT="1">}
%TCIDATA{BibliographyScheme=Manual}
%TCIDATA{Created=Tuesday, August 23, 2005 13:44:50}
%TCIDATA{LastRevised=Wednesday, August 01, 2012 12:27:11}
%TCIDATA{<META NAME="GraphicsSave" CONTENT="32">}
%TCIDATA{<META NAME="DocumentShell" CONTENT="Standard LaTeX\Blank - Standard LaTeX Article">}
%TCIDATA{Language=American English}
%TCIDATA{CSTFile=Scientific Article.cst}

\newtheorem{theorem}{Theorem}

\newenvironment{proof}[1][Proof]{\noindent\textbf{#1.} }{\ \rule{0.5em}{0.5em}}
\input{tcilatex}
\let\footnote=\endnote

\begin{document}

\title{Measurement error and deconvolution in spaces of generalized
functions }
\author{Victoria Zinde-Walsh\thanks{%
The support of the Social Sciences and Humanities Research Council of Canada
(SSHRC), the Fonds qu\'{e}becois de la recherche sur la soci\'{e}t\'{e} et
la culture (FRQSC) is gratefully acknowledged. P.C.B.Phillips provided
invaluable encouragement, comments and advice; I also thank J. Galbraith for
his support and comments. The Associate Editor, Y. Kitamura and anonymous
referees provided corrections, comments and suggestions that were extremely
helpful; I am very grateful to them. } \\
%EndAName
\\
McGill University and CIREQ\\
victoria.zinde-walsh@mcgill.ca\\
(514) 398 4834}
\maketitle
\date{}

\begin{center}
\bigskip \pagebreak
\end{center}

Running head: Deconvolution in generalized functions

Victoria Zinde-Walsh

Department of Economics, McGill University

855 Sherbrooke Street West,

Montreal, Quebec, Canada

H3A 2T7

\begin{center}
{\LARGE Abstract}
\end{center}

This paper considers convolution equations that arise from problems such as
measurement error and non-parametric regression with errors in variables
with independence conditions. The equations are examined in spaces of
generalized functions to account for possible singularities; this makes it
possible to consider densities for arbitrary and not only absolutely
continuous distributions, and to operate with Fourier transforms for
polynomially growing regression functions. Results are derived for
identification and well-posedness in the topology of generalized functions
for the deconvolution problem and for some regression models. Conditions for
consistency of plug-in estimation for these models are derived.\pagebreak

\section{Introduction}

The focus of this paper is on convolution equations arising in models with
measurement error. Reviews of measurement error models are in Carroll and
Stefanski (2006), Chen, Hong and Nekipelov (2011), Meister(2009). The
convolution equations that are examined here also arise in other contexts;
various models that go beyond measurement error models are enumerated in
Zinde-Walsh (2012). This paper is devoted to the mathematical treatment of
such equations.

Start with the classical measurement error where the variable of interest $%
x^{\ast }$ is observed with error, $u:$%
\begin{equation}
z=x^{\ast }+u.  \label{sum}
\end{equation}%
Here the density of $x^{\ast }$ is the function of interest, denoted $g;$
the observed $z$ has density $w$; suppose that the measurement/contamination
error $u$ is independent of $x^{\ast }$ and has density $f.$ Then the
convolution equation 
\begin{equation}
g\ast f=w,  \label{conv}
\end{equation}%
holds. When densities exist, $g\ast f$ denotes $\int g(z-u)f(u)du$. In
problems that often arise in image processing, epidemiology, medicine the
error density $f$ could be assumed known, e.g. the error is Gaussian noise.

The assumption of known error distribution may not be realistic; if $f$ is
not known additional conditions are needed to identify $g.$ If another
observation, $x,$ on $x^{\ast }$ is available: $x=x^{\ast }+u_{x}$
conditions under which a unique solution exists for the density of interest, 
$g,$ were given by Kotlyarski (1967) in the case of independence between $%
x^{\ast }$ and $u_{x};$ the approach there was to consider the joint
characteristic function of the two measurements.

The assumption of independence of the error in the second measurement may be
too strong for some applications, for example in Cunha, Heckman and
Schennach (2010) one measurement for the latent variable representing a
skill of a child was constructed from test scores where one could plausibly
assume independence for measurement error, but the extra measurements came
from reports by teachers and parents where such an assumption could be
unrealistic. Assume that for the error of the second measurement, $u_{x},$
the conditional expectation is zero: $E(u_{x}|x^{\ast },u)=0$ (but
heteroscedasticity is not ruled out). Denote by $x_{k}$ the $k-th$ component
of $x=(x_{1},...,x_{d})$ and by $h_{k}(x)$ the function $x_{k}g(x)$, and
assume existence of density weighted conditional moments $w_{2k}(z)=E\left(
w(z)x_{k}|z\right) .$ Then in addition to the equation $\left( \ref{conv}%
\right) $ more convolution equations can be written: 
\begin{equation*}
h_{k}\ast f=w_{2k},\text{ }k=1,...,d.
\end{equation*}%
Indeed (with integration here and everywhere in this paper over the whole
space), 
\begin{equation*}
E(w(z)x_{k}|z)=E(w(z)x_{k}^{\ast }|z)=\int (z_{k}-u_{k})g(z-u)f(u)du.
\end{equation*}%
Therefore a system of convolution equations arises for the unknown function $%
g$ and functions $h_{k},$ where $h_{k}(x)=x_{k}g(x),$

\begin{eqnarray}
g\ast f &=&w_{1};  \notag \\
(h_{k})\ast f &=&w_{2k},\text{ }k=1,...d.  \label{newey2}
\end{eqnarray}

Another model that leads to a system of equations is nonparametric
regression with Berkson error (see, e.g. Meister, 2009 for review). Consider
a nonparametric regression model 
\begin{equation*}
y=g(x)+u_{y},
\end{equation*}%
where $x$ may be correlated with the error, $u_{y},$ but where some
instruments, $z,$ are available such that 
\begin{equation}
z=x+u,  \label{berkson}
\end{equation}%
with $z$ is independent of $u$ (Berkson error) and $E(u_{y}|z)=0.$ Denote
the density of $x$ by $f_{x},$ density of $z$ by $f_{z},$ density of $u$ by $%
f$ $_{u}$ and correspondingly that of $-u$ by $f_{-u}.$ Then equation $%
\left( \ref{berkson}\right) $ gives%
\begin{equation}
f_{x}=f_{z}\ast f_{-u}.  \label{berks1}
\end{equation}%
Additionally if expectation conditional on $z$ exists, using independence
between $z$ and $u,$ 
\begin{equation*}
w(z)=E\left( y|z\right) =E\left( g(x)|z\right) =\int g(z-u)f_{u}(u)du;
\end{equation*}%
then%
\begin{equation}
g\ast f_{u}=w.  \label{berks2}
\end{equation}%
The system of equations $\left( \ref{berks1},\ref{berks2}\right) $ involves
two unknown functions, $f_{u}$ and $g,$ where usually the interest is in the
regression function.

The regression function could have instead of an observable argument, $x,$ a
mismeasured or latent argument, $x^{\ast }.$ The model could provide more
equations if another measurement, $x,$ on $x^{\ast }$ were available.
Consider 
\begin{eqnarray}
y &=&g(x^{\ast })+u_{y};  \label{a} \\
x &=&x^{\ast }+u_{x};  \label{b} \\
z &=&x^{\ast }+u.  \label{c}
\end{eqnarray}%
Here $x,y,z$ are observed; assume that $u$ is a Berkson type\ measurement
error independent of $z$ with (unknown) density $f,$ assume that $%
u_{y},u_{x} $ have zero conditional (on $z$ and the other errors)
expectations. For this model Newey (2001) proposed to consider an additional
equation assuming that the conditional moment $E(g(x)x|z)$ exists. In the
univariate case this gives a system of two equations with two unknown
functions (as discussed in Schennach, 2007, Zinde-Walsh, 2009). Define $%
h(x)=xg(x),$ then 
\begin{eqnarray}
g\ast f &=&w_{1},  \label{newey1} \\
h\ast f &=&w_{2},  \notag
\end{eqnarray}%
with $w_{1}=E(y|z),$ $w_{2}=E(xy|z)$ known.

In the multivariate case for $x=(x_{1},...,x_{d})\in R^{d},$ $%
w_{2k}=E(x_{k}y|z),$ $k=1,...,d$ , $h_{k}(x)=x_{k}g(x)$\ this can be
generalized to equations $\left( \ref{newey2}\right) .$ If the density $%
f_{x^{\ast }}$ of $x^{\ast }$ were of interest, equation $f_{x^{\ast
}}=f_{z}\ast f_{-u}$ could be added.

If the independence between the mismeasured variable and measurement error
holds conditionally then the equation $\left( \ref{conv}\right) $ can be
written for densities conditional on some $x_{c}$, where if such densities
exist this equation is%
\begin{equation*}
\int g((z-u)|x_{c})f(u|x_{c})du=w(z|x_{c}).
\end{equation*}%
Then equation $\left( \ref{conv}\right) $ is defined for functions in spaces
where the dimension or the argument is augmented by the dimension of the
conditioning variable, $x_{c}.$

A common way of providing solutions to $\left( \ref{conv}\right) $ and other
equations is to consider them in some normed function spaces, e.g. of
integrable functions such as \thinspace $L_{p}$ or weighted $L_{p}$ spaces,
e.g. Carrasco, Florens and Renault (2007). Solving the equations is often
done by employing Fourier transforms. Since convolutions and Fourier
transforms can be defined in different spaces, the question is which spaces
are best suited for the problems.

Devroye and Gy\H{o}rfi (1985) view density from the perspective of $L_{1}$
space since the density (when it exists as a function) is absolutely
integrable. However, in various problems of interest density may not exist,
as in cases of measurement error for individuals answering survey questions
(say, about income or consumption) where the probability of truthful
reporting is non-zero and a mass point can arise (Hu, 2008). Density
functions in $L_{1}$ do not necessarily converge even if the corresponding
distribution functions converge uniformly. The way to overcome both the
non-existence of density and convergence problems is to consider density as
a generalized derivative of the distribution function as proposed in
Zinde-Walsh (2008); this is done by defining the distribution function as a
functional on a suitable space of well-behaved differentiable functions so
that with this definition the distribution function inherits the good
properties of the well-behaved functions and becomes differentiable (details
in Zinde-Walsh, 2008, also see section 2.1 below); moreover, in the space of
generalized functions the generalized densities converge if the distribution
functions converge thus the problem of defining the density for a
distribution is well-posed there.

Working in spaces of generalized functions also extends the classes of
regression functions for which solutions can be obtained. In various
applications the object of interest may be represented by "sum of peaks"
function, such as a sum of delta functions see, e.g. Klann, Kuhn, Lorenz,
Maass and Thiele (2007) where applications in astrophysics and mass
spectroscopy are discussed; functions with sparse support or support that
includes isolated points can arise in various applications. If $g$
represents a regression function, applying Fourier transform to the
convolution equations in spaces of functions may require severe restrictions
on the function. For example, in spaces of integrable functions (such as $%
L_{1},L_{2})$ linear and polynomial regression functions as well as
distribution functions in binary choice models would be excluded. Again, a
natural extension is to consider spaces of generalized functions where
Fourier transforms are defined for functions that can grow at polynomial
rates as well as for objects with sparse support. Spaces of generalized
functions were utilized by Klann et al (2007) for sum of peaks regression,
by Zinde-Walsh (2008) for generalized density functions; by Schennach (2007)
and Zinde-Walsh (2009) for the problem in errors in variables univariate
regression model with possible polynomial growth in the function.

This paper examines convolution equations in generalized function spaces.
The interest here focuses on the equation $\left( \ref{conv}\right) $ where
only the function $g$ is unknown (deconvolution) and the system of equations 
$\left( \ref{newey2}\right) $ with two unknown functions. The generalized
functions spaces considered here are described in detail in Schwartz (1966)
and Gel'fand and Shilov (1964).

Much of the paper is devoted to a theoretical development of the problem of
solving these convolution equations in spaces of generalized functions:
existence of a unique solution (identification) and continuity of the
mapping from the known functions to the solution (well-posedness).

The usual blueprint for deconvolution in function spaces works as follows.
Assume: $g\ast f=w$ holds, convolution exists, e.g. for functions in $L_{1},$
say, densities. Fourier transform $\left( Ft\right) $ is defined: for the
function $g,$ $Ft(g)=\int g(x)\exp (ix^{T}\zeta )dx;$ this is a
characteristic function if $g$ is a density. Exchange formula applies: for
Fourier transforms $\gamma =Ft(g);\phi =Ft(f);\varepsilon =Ft(w)$ a
convolution is transformed into product:%
\begin{equation*}
g\ast f=\omega \Longrightarrow \gamma \phi =\varepsilon .
\end{equation*}%
Also, if additionally Fourier transform exists for $h_{k}(x)=x_{k}g(x),$then 
$Ft(h_{k})=-i\frac{\partial }{\partial \xi _{k}}\gamma (\xi );$ denote the
derivative $\frac{\partial }{\partial \xi _{k}}\gamma (\xi )$ by $\gamma
_{k}^{\prime }.$

If $w$ and $f$ in equation $\left( \ref{conv}\right) $ (thus also $%
\varepsilon $ and $\phi )$ are known and $\phi \neq 0$ solve the algebraic
equation : $\gamma =\phi ^{-1}\varepsilon ,$ then apply the inverse Fourier
transform, $Ft^{-1},$ to obtain $g,$ 
\begin{equation*}
g=Ft^{-1}\left( \gamma \right) .
\end{equation*}

When the functions that enter the equations are estimated based on available
data on the observables, the solutions will be stochastic and for
establishing consistency well-posedness of the solutions becomes crucial;
Carrasco et al (2007), An and Hu (2012) discuss well-posedness that applies
to similar problems in various normed spaces, mostly in spaces of integrable
functions, $L_{p}.$

Thus for pursuing a similar approach in spaces of generalized functions the
following questions are addressed here: 1. Under what assumptions do the
convolution equations hold? 2. When is Fourier transform and its inverse
defined? 3. When does the exchange formula hold (and when is a
multiplicative product defined)? 4. When do transformed equations have a
unique solution? 5. When is the problem well-posed, that is the solution
continuously depends on the known generalized functions in the equation?

Once the conditions for identification and well-posedness are established in
the generalized functions space, the question of consistent plug-in
estimation can be examined. Suppose that the known functions, such as the
densities (characteristic functions) of the observables, conditional
expectations of the observables (and their Fourier transforms) in the models
considered are consistently estimated; the solutions to the convolution
equations based on these estimated functions are now random generalized
functions; do they converge in some stochastic sense to the true function in
the topology of generalized functions?

In order to answer this one needs to consider stochastic generalized
functions which represent stochastic functionals on the spaces of well
behaved (differentiable, etc.) functions. These are described in Gel'fand
and Vilenkin (1964) and Koralov and Sinai (2007). \ The question of
consistency requires providing conditions for stochastic convergence of
Fourier transforms and inverse Fourier transforms, of derivatives and of
products for random generalized functions - all the operations that are
involved in solving for the unknown functions.

Section 2 of this paper introduces the spaces of generalized functions
considered here, gives conditions when convolutions are defined for
generalized functions of interest, and when products of Fourier transforms\
are defined. Then the convolution theorem ("exchange formula") is provided
making it possible to transform convolutions into products of Fourier
transforms. Section 3 gives results on existence and uniqueness of solutions
of the transformed equation\ or system of equations. These results give
conditions for identification. General results on well-posedness of the
solutions are proved here in Section 3 (some were previously given in the
working paper Zinde-Walsh, 2009). Section 4 provides stochastic properties
of generalized functions. Section 5 gives conditions for consistent (in the
topology of generalized functions) deconvolution and also for consistent
non-parametric estimation of a regression function in the model $\left( \ref%
{a}-\ref{c}\right) $.

\section{Convolution equations in generalized functions}

\subsection{Spaces of generalized functions}

Many different spaces of generalized functions can be defined; each may be
best suited to some particular class of problems. This paper focuses on well
known classical spaces of generalized functions discussed in the books by
Schwartz\ (1966) and Gel'fand and Shilov (1964); reference is also made to
related spaces as presented e.g. by Sobolev (1992).

Define a space of well behaved functions, sometimes called test functions;
denote a generic such space by $G$. The functions in $G$ are defined on the
real space, $R^{d},$ but may take values in the complex space since we
consider characteristic functions and Fourier transforms that can take
complex values. Two widely used spaces of test functions are $G=D$ and $G=S.$

The space $D$ is the linear topological space of infinitely differentiable
functions each defined on a compact support, so that $D\subset C^{\infty
}(R^{d}),$ where $C^{\infty }(R^{d})$ is the space of all infinitely
differentiable functions; to converge in $D$ the sequence of functions
should be supported on a common bounded set and converge uniformly itself as
well as have uniformly converging derivatives of all orders.

To define the space $S$ first introduce some notation. For any vector of
non-negative integers $m=(m_{1},...m_{d})$ and vector $t\in R^{d}$ denote by 
$t^{m}$ the product $t_{1}^{m_{1}}...t_{d}^{m_{d}}$ and by $\partial ^{m}$
the differentiation operator $\frac{\partial ^{m_{1}}}{\partial x_{1}^{m_{1}}%
}...\frac{\partial ^{m_{d}}}{\partial x_{d}^{m_{d}}}.$ The space $S\subset
C^{\infty }(R^{d})$ of rapidly decreasing functions is then defined as:%
\begin{equation*}
S=\left\{ \psi \in C^{\infty }(R^{d}):\left\vert t|^{m}|\partial ^{l}\psi
(t)\right\vert =o(1)\text{ as }t\rightarrow \infty \right\} ,
\end{equation*}%
for any $d-$dimensional vectors of integers $m,l,$ where $l=(0,...0)$
corresponds to the function itself, $\left\vert t\right\vert $ is the vector
of absolute values of vector $t$, $t\rightarrow \infty $ coordinate-wise;
thus \ the functions in $S$ go to zero at infinity faster than any power as
do their derivatives of any order. A sequence in $S$ converges if in every
bounded region each $\left\vert t|^{l}|\partial ^{k}\psi (t)\right\vert $
converges uniformly.

The generalized functions space $G^{\ast }$ is the dual space, the space of
linear continuous functionals on $G.$ For any $b\in G^{\ast }$ and any $\psi
\in G$ denote the value of the functional $b$ applied to $\psi $ by $\left(
b,\psi \right) .$ The topology is defined by weak convergence: a sequence $%
b_{n}\in G^{\ast }$ converges to $b\in G^{\ast }$ if for any $\psi \in G$
the sequence of the values of the functionals converges: $\left( b_{n},\psi
\right) \rightarrow \left( b,\psi \right) .$

Sobolev (1992) gives a general definition (in 1.8) where he points out a
subtle distinction between the functional and a generalized function. Any
generalized function, $b\in G^{\ast },$ can be defined by an equivalence
class $\{b_{n}\}$ of weakly converging sequences of test functions $b_{n}\in 
$\thinspace $G:$%
\begin{equation*}
b=\left\{ \left\{ b_{n}\right\} :b_{n}\in G,\text{ such that for any }\psi
\in G,\underset{n\rightarrow \infty }{\lim }\int b_{n}(t)\overline{\psi (t)}%
dt=(b,\psi )<\infty \right\} ,
\end{equation*}%
where $\int \cdot dt$ denotes the multivariate integral over $R^{d},$
over-bar indicates complex conjugate for complex-valued functions and $%
(b,\psi )$ provides the value of the functional $b\in G^{\ast }$ for $\psi
\in G.$ However, the same functional can be represented by different
generalized functions corresponding to different spaces $G.$ For example,
consider the $\delta -$function. This is a linear continuous functional on
the space $C^{(0)}$ of continuous functions as well as on $D$ or $S$ and
provides $(\delta ,\psi )=\psi (0);$ it can be represented as an equivalence
class of $\delta -$convergent sequences of continuous functions as well as
of functions from $D$ or $S$. This implies that a generalized function
considered as a functional can sometimes be extended to a linear continuous
functional on a wider space.

Note that $D\subset S$ and thus there is the inclusion of the linear dual
spaces: $S^{\ast }\subset D^{\ast };$ convergence in $S^{\ast }$ of linear
continuous functionals implies their convergence in $D^{\ast },$ however, a
sequence of elements of $S^{\ast }$ that converges in $D^{\ast }$ may not
converge in the topology of $S^{\ast }$ (see the example in section 3.2).

In the terminology of Schwartz (1966) generalized functions are sometimes
called "distributions" and elements of $S^{\ast }$ "tempered distributions";
here we shall call them generalized functions indicating the specific space
considered. In Sobolev (1992, p.59) a diagram shows various chains of
generalized functions spaces embedded in each other; these are spaces of
functionals on spaces of continuously differentiable (of different orders)
functions, continuously differentiable functions with compact support and
Sobolev spaces.

Any locally summable (integrable on any bounded set) function $b(t)$ defines
a generalized function $b$ in $D^{\ast }$ by%
\begin{equation}
(b,\psi )=\int b(t)\psi (t)dt  \label{value}
\end{equation}%
on the space of real-valued test functions or by 
\begin{equation}
(b,\psi )=\int b(t)\overline{\psi (t)}dt;  \label{reg}
\end{equation}%
for complex-valued functions. Any locally summable function $b(t)$ that
additionally satisfies 
\begin{equation}
\int \left( (1+t^{2})^{-1}\right) ^{m}\left\vert b(t)\right\vert dt<\infty 
\text{ }  \label{conds}
\end{equation}%
for some non-negative integer-valued vector\textit{\ }$m=(m_{1},...,m_{d})$
with $\left( (1+t^{2})^{-1}\right) ^{m}$\ denoting the corresponding product 
$\Pi _{i=1}^{d}\left( (1+t_{i}^{2})^{-1}\right) ^{m_{i}}$similarly by (\ref%
{reg}) defines a generalized function $b$ in $S^{\ast };$ such generalized
functions are called regular functions in the space of generalized functions.

Generalized derivatives $\partial ^{m}b,$ $m=(m_{1},...m_{d})$ exist for any
generalized function $b$ in $D^{\ast }$ and $S^{\ast }$ and are defined as
functionals by the value for any function, $\psi ,$ as $(\partial ^{m}b,\psi
)=(-1)^{m}(b,\partial ^{m}\psi )$ (here $\left( -1\right)
^{m}=(-1)^{m_{1}+...+m_{d}}).$ The differentiation operator is continuous in 
$S^{\ast }$and in $D^{\ast }.$

Thus, for example, a generalized density function, $f,$ in the univariate
case is defined as a generalized derivative of the regular distribution
function, $F(x),$ by providing for any real-valued function, $\psi \in G,$
the value 
\begin{equation}
(f,\psi )=-\int F(x)\frac{d\psi }{dx}(x)dx.  \label{dens}
\end{equation}%
Thus defined generalized density is an element of $D^{\ast },$ or of $%
S^{\ast },$ moreover, it continuously depends on the distribution functions
in the topology of either $S^{\ast }$ or $D^{\ast }$ by continuity of the
differentiation operator. E.g. if $F(x)=I(x\geq 0)$ (the indicator function $%
I(\theta )=1$ if $\theta $ is true, zero otherwise), by substituting into $%
\left( \ref{dens}\right) $ we obtain the generalized derivative as the Dirac 
$\delta -$function that provides $\left( \delta ,\psi \right) =\psi (0).$

Any generalized function can be represented as a generalized derivative of
some order of a continuous function; to be an element of $S^{\ast }$ such a
continuos function cannot grow faster than a polynomial. More specifically,
by Schwartz (1966, theorem VI, p.239) any generalized function from $b\in
S^{\ast }$ can be represented as a generalized derivative of a continuous
function: $b=\partial ^{l}c$ for a continuous function $c(x),$ which can be
written as $c(x)=\left( 1+x^{2}\right) ^{\frac{m}{2}}\tilde{c}(x),$ where
the continuous function $\tilde{c}(x)$ is uniformly bounded on $R^{d}$: $%
\left\vert \tilde{c}\right\vert <V.$ Consider some fixed vectors of
non-negative integers, $l$ and $m$ and a bound $V.$ Denote by $S_{l,m}^{\ast
}(V)$ the class of generalized functions, $b,$ that have such a
representation for $l$,$m$ and continuous $\tilde{c}$: $\left\vert \tilde{c}%
(x)\right\vert <V;$ any bounded in the topology of $S^{\ast }$ set of
generalized functions has the representation $S_{l,m}^{\ast }(V)$ (see
Schwartz, 1966, (VII,5;5), p.246-247). The smallest such $l$ is the order of
integration that applied to a generalized function produces a continuous
function and is related to the degree of singularity of the generalized
function, while $m$ characterizes its growth at infinity.

The Fourier transform ($Ft)$ is defined for functions in $D,$ and more
generally in $S;$ it is an isomorphism of the space $S;$ the value of the
Fourier transform for the test function $\psi \in S$ also belongs to $S$ and
its value at any $s\in R^{d}$ is $Ft(\psi )(s)=\int \psi (x)e^{ix^{T}s}dx$ ($%
x^{T}$ denotes transpose); in the dual spaces $D^{\ast }$ and $S^{\ast }$
the Fourier transform is given by $(Ft(b),\psi )=(b,Ft(\psi ))$. The Fourier
transform is an isomorphism of the space $S^{\ast }.$

For the analysis of this paper we mostly consider the functions of interest
in the generalized functions space $S^{\ast },$ because of the fact that
Fourier transform represents an isomorphism which permits to apply it or its
inverse to any element, and the operator $Ft$ and the inverse operator, $%
Ft^{-1},$ are continuous in $S^{\ast }.$

\textbf{Assumption 1. } \textit{The generalized functions defined by the
statistical model are in the space }$S^{\ast }.$

This assumption allows for any distribution function on $R^{d}$ and does not
require existence of density functions, for regression functions this allows
growth at infinity, but limits it by $\left( \ref{conds}\right) ,$ thus
binary choice or polynomially growing regression functions are included, but
not functions of exponential growth. Note that exponentially growing
functions are included in the space $D^{\ast }.$

\subsection{Existence of convolutions; convolution pairs}

The convolution of generalized functions can be defined in different ways
(see, e.g. Schwartz, 1966, p.154 or Sobolev, 1992, p. 63; Gel'fand and
Shilov, 1964, v. I, p.103-104); it does not always have meaning and exists
for specific pairs of mutual convolutors.

Consider the following spaces of test functions and of generalized functions
on $R^{d}:$ $\ D,S,C^{\infty },\mathcal{O}_{M},D^{\ast },S^{\ast },E^{\ast },%
\mathcal{O}_{C}^{\ast },$ where $C^{\infty }=C^{\infty }(R^{d})$ is the
space of infinitely differentiable functions on $R^{d};$ $\mathcal{O}%
_{M}\subset C^{\infty }$ is the subspace of infinitely differentiable
functions with every derivative growing no faster than a polynomial, $%
E^{\ast }$ is the subspace of generalized functions with compact support,
and $\mathcal{O}_{C}^{\ast }$ is the subspace of rapidly decreasing (faster
than any polynomial) generalized functions (Schwartz, 1966, p.244). Table 1
shows pairs of spaces for elements of which convolution is defined (X
indicates that convolution cannot be defined for some pairs of elements of
the spaces); the table entries indicate to which space the element resulting
from the convolution operation belongs. The table is an extended version of
the one in the textbook by Kirillov and Gvishiani (1982, p.102) and
summarizes the well-established results in the literature.

\begin{table*}[tbp]
\caption{The convolution table}
\label{sphericcase}$%
\begin{array}{ccccccc}
g\setminus f & D & S & E^{\ast } & O_{C}^{\ast } & S^{\ast } & D^{\ast } \\ 
D & D & S & D & S & O_{M} & C_{\infty } \\ 
S & S & S & S & S & O_{M} & X \\ 
E^{\ast } & D & S & E^{\ast } & O_{C}^{\ast } & S^{\ast } & D^{\ast } \\ 
O_{C}^{\ast } & S & S & O_{C}^{\ast } & O_{C}^{\ast } & S^{\ast } & X \\ 
S^{\ast } & O_{M} & O_{M} & S^{\ast } & S^{\ast } & X & X \\ 
D^{\ast } & C_{\infty } & X & D^{\ast } & X & X & X%
\end{array}%
$%
\end{table*}

The convolution pairs in the table where convolution is defined all possess
the hypocontinuity property (Schwartz, 1966, p.167, p.247-257).
Hypocontinuity of a bilinear operation means that if one component of a pair
is in a bounded set in $G^{\ast }$ and the other converges to zero in $%
G^{\ast }$, the result of the bilinear operation converges to zero
(Schwartz, 1966, pp.72,73).

Convolution of a pair of arbitrary generalized functions is not always
defined in $S^{\ast }$. Bounded support or at least rapid decline at
infinity is needed for a convolution to exist. For example, convolution of a
constant function with another constant function on $\ R^{1}$ is not
defined. Nevertheless there are pairs of subspaces of generalized functions
beyond those in the Table for which convolution defines a generalized
function; spaces where convolution is defined can be combined. The
convolution is a bilinear operation (Schwartz, 1966, p.157); convolution of
a tensor product of generalized functions on two vector spaces, $%
R^{d_{1}},R^{d_{2}}$ is the tensor product of the convolutions of functions
in each space (Schwartz, 1966, p.158). Moreover convolution of any number of
generalized functions can be defined in $D^{\ast }$ as long as all except
possibly one have compact supports and this operation is associative and
commutative (Schwartz, 1966, p.158); a variable shift or derivative of a
convolution exists and is obtained by a shift or differentiation of any of
the generalized functions entering the convolution (Schwartz, 1966, p.160).

\textbf{Definition }\textit{Call a pair of subspaces of generalized
functions, }$A\subset G^{\ast }$\textit{\ and }$B\subset G^{\ast }$\textit{\
a convolution pair }$(A,B)$\textit{\ if for any }$a\in A,b\in B$\textit{\
convolution }$a\ast b$\textit{\ is defined in }$G^{\ast };$\textit{\ it is a
hypocontinuous operation in the topology of }\thinspace $G^{\ast }.$

Note that if $(A,B)$ is a convolution pair then $\left( A\cup G,B\cup
G\right) $ is also a convolution pair.

All the pairs of spaces in the Table satisfy this definition.

\textbf{Assumption 2. }\textit{The statistical model defines functions, }$g$%
\textit{\ and }$f$\textit{\ in }$G^{\ast }$\textit{\ such that }$g\in
A\subset G^{\ast }$\textit{\ and }$f\in B\subset G^{\ast }$\textit{; the
subspaces }$(A,B)$\textit{\ form a convolution pair.}

This assumption implies that (\ref{conv}) holds; it is often satisfied in
statistical problems. Convolution of generalized density functions exists,
thus $\left( \ref{sum}\right) $ leads to $\left( \ref{conv}\right) $ for all
distributions even when the density functions do not exist in the ordinary
sense. The finite sum of $\delta -$functions considered by Klann et al
(2007) is in $E^{\ast },$ thus convolution with any element of $D^{\ast }$
(or $S^{\ast })$ exists in $D^{\ast }(S^{\ast }).$ Schennach (2007)
considered univariate errors in variables model with instrumental variables
and a regression function, $g,$ bounded by polynomials; these regression
functions are in $S^{\ast },$ convolution with any generalized density
function from $\mathcal{O}_{C}^{\ast }$ exists in $S^{\ast }.$ For
regression functions $g$ in subspaces of $S^{\ast }$ where growth is more
restricted, convolution with less rapidly declining $f$ may exist.

For a generalized function $g$ denote by $h_{k}$ the product of $g$ by the $%
k $th component of $x\in R^{d},$ $x_{k},$ for a regular function $g(x)$ this
is $h_{k}(x)=x_{k}g(x)$. In many cases when the convolution $g\ast f$ is
defined, the convolution $h_{k}\ast f$ is also defined. Indeed, this is so
in all the examples above: if $g$ has compact support, so does any $h_{k};$
if $g\in C^{\infty }$, it is true of $h_{k}$ as well, etc. Thus some models
accommodate not only (\ref{conv}), but also other equations, e.g. providing (%
\ref{newey2}).

\textbf{Assumption 3.}\textit{\ The statistical model is such that in
addition to Assumption 2, }$h_{k}\in A$\textit{, }$k=1,...,d.$

\subsection{Fourier transforms, exchange formula and some special
convolution pairs}

Next, consider the Fourier transforms and the exchange formula.

For the generalized functions in equations (\ref{newey2}) denote Fourier
transforms as $\gamma =Ft(g),$ $\phi =Ft(f)$ and $\varepsilon
_{i}=Ft(w_{i}). $ Recall that the Fourier transform always exists in $%
S^{\ast }$ and is a continuous isomorphism.

The classical case of the convolution pair ($S^{\ast },\mathcal{O}_{C}^{\ast
})$ is examined in Schwartz (1966). If $(g,f)$ belong to ($S^{\ast },%
\mathcal{O}_{C}^{\ast })$ the convolution exists and equation (\ref{conv})
transforms into the multiplicative equation $\gamma \phi =\varepsilon ,$
where $\gamma \in S^{\ast },$ $\phi \in \mathcal{O}_{M}$ and $\varepsilon
\in S^{\ast }$ (Schwartz, 1966, p.281-282). The product between a
generalized function in $S^{\ast }$ and a function from $\mathcal{O}_{M}$
always exists in $S^{\ast };$ multiplication is a hypocontinuous operation
(Schwartz, 1966, p.243-246). Thus there is a dichotomous relation between
the classical convolution pair $\left( S^{\ast },\mathcal{O}_{C}^{\ast
}\right) $ and the product pair of generalized functions spaces, $\left(
S^{\ast },\mathcal{O}_{M}\right) .$ Below we show that this dichotomy
extends to other convolution pairs of spaces.

Define a product pair of spaces as a pair $\left( \mathit{\Gamma ,\Phi }%
\right) $ of subspaces of $S^{\ast }\,$such that for any $\gamma \in \Gamma
,\phi \in \Phi $ the product $\gamma \phi $ defines an element $\varepsilon $
in $S^{\ast };$ the operation of multiplication for $\left( \mathit{\Gamma
,\Phi }\right) $ is hypocontinuous in the topology of $S^{\ast }.$

\begin{theorem}
If Assumptions 1 and 2 are satisfied then for any $\left( g,f\right) \in
(A,B)$ the exchange formula applies.
\end{theorem}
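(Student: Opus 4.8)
The plan is to reduce the claim to the classical exchange formula for ordinary rapidly decreasing functions and then propagate it to the whole convolution pair by an approximation-and-continuity argument. The three ingredients I would lean on are all in place above: the Fourier transform $Ft$ is a continuous isomorphism of $S^{\ast }$; by Assumption 2 (with $G^{\ast }=S^{\ast }$ by Assumption 1) convolution is a hypocontinuous bilinear operation on $(A,B)$; and for $\psi _{1},\psi _{2}\in S$ the identity $Ft(\psi _{1}\ast \psi _{2})=Ft(\psi _{1})\,Ft(\psi _{2})$ is classical. The goal statement ``the exchange formula applies'' I read as $Ft(g\ast f)=\gamma \phi $, so the proof must simultaneously produce the product $\gamma \phi $ as a well-defined element of $S^{\ast }$ and identify it with $Ft(g\ast f)$.

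First I would enlarge the working pair to $(A+S,B+S)$. Expanding $(a+\psi )\ast (b+\chi )$, the cross terms $a\ast \chi $ and $\psi \ast b$ are convolutions of an element of $S^{\ast }$ with a test function, which by the convolution table land in $\mathcal{O}_{M}$, while $\psi \ast \chi \in S$ and $a\ast b$ is given; hence $(A+S,B+S)$ is again a convolution pair and inherits hypocontinuity. Since $S$ is dense in $S^{\ast }$ in the weak topology, I would then choose sequences $g_{n},f_{n}\in S$ with $g_{n}\rightarrow g$ and $f_{n}\rightarrow f$ in $S^{\ast }$, and set $\gamma _{n}=Ft(g_{n})$, $\phi _{n}=Ft(f_{n})$, so that the classical formula gives $Ft(g_{n}\ast f_{n})=\gamma _{n}\phi _{n}$.

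The limit passage is carried out through the telescoping identity
\[
g_{n}\ast f_{n}-g\ast f=(g_{n}-g)\ast f_{n}+g\ast (f_{n}-f),
\]
every term of which lives inside the convolution pair $(A+S,B+S)$, since $g_{n}-g\in A+S$ and $f_{n}-f\in B+S$. A weak-convergent sequence in $S^{\ast }$ is bounded (Banach--Steinhaus, $S$ being barrelled), so $\{f_{n}\}$ is a bounded set while $g_{n}-g\rightarrow 0$, whence hypocontinuity sends the first term to zero; the second term vanishes likewise with the singleton $\{g\}$ bounded and $f_{n}-f\rightarrow 0$. Thus $g_{n}\ast f_{n}\rightarrow g\ast f$, and continuity of $Ft$ yields $\gamma _{n}\phi _{n}=Ft(g_{n}\ast f_{n})\rightarrow Ft(g\ast f)$. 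This shows at once that the product $\gamma \phi $ is well defined in $S^{\ast }$ as the limit of $\gamma _{n}\phi _{n}$ and that it equals $Ft(g\ast f)$, which is the exchange formula.

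The step I expect to be the main obstacle is making this limit passage fully rigorous. Hypocontinuity, unlike joint continuity, only controls the bilinear operation when one factor ranges over a bounded set and the other tends to zero, so the telescoping must be arranged precisely so that each of its two terms has exactly this form, and one must confirm that $\{f_{n}\}$ and $\{g\}$ are genuinely bounded in the topology of $S^{\ast }$ and that the off-diagonal terms remain within the fixed pair $(A+S,B+S)$. The companion point requiring care is the density of $S$ in the relevant subspaces in the $S^{\ast }$ topology, which underwrites the existence of the approximating sequences.
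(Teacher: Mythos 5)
Your overall skeleton --- approximate, apply a known exchange formula to the approximants, pass to the limit using hypocontinuity and continuity of $Ft$ --- is the same as the paper's, but the approximating device you chose breaks the limit passage. The gap is in the step ``$\{f_{n}\}$ is a bounded set while $g_{n}-g\rightarrow 0$, whence hypocontinuity sends the first term to zero.'' The hypocontinuity of the convolution pairs that involve $S$ as a factor (the entries $S\ast S^{\ast }$, $S\ast D^{\ast }$, etc.\ of Table 1, taken from Schwartz) is hypocontinuity with respect to bounded sets \emph{in the topology of} $S$, not of $S^{\ast }$. Your $f_{n}\in S$ converge to $f$ only in $S^{\ast }$; since $S$ is a Montel space, $\{f_{n}\}$ cannot be bounded in $S$ unless $f$ itself lies in $S$. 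The property you actually need --- $u_{n}\ast \chi _{n}\rightarrow 0$ whenever $u_{n}\rightarrow 0$ in $S^{\ast }$ and $\{\chi _{n}\}\subset S$ is bounded merely in $S^{\ast }$ --- is false in general: take $d=1$, $\chi _{n}(x)=\rho (x-n)$ for a fixed bump $\rho \in D$ (so $\chi _{n}\rightarrow 0$ in $S^{\ast }$ and is in particular bounded there) and $u=\sum_{k\in \mathbb{Z}}\delta _{k}\in S^{\ast }$; then $\chi _{n}\ast u=\sum_{j}\rho (\cdot -j)$ is the same nonzero periodic function for every $n$ and does not tend to $0\ast u=0$. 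The same mechanism can defeat both terms of your telescoping for badly chosen approximants, and nothing in ``density of $S$ in $S^{\ast }$'' rules such approximants out; consequently the assertion that $(A+S,B+S)$ ``inherits hypocontinuity'' in the topology of $S^{\ast }$ is unjustified as stated.

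The paper avoids exactly this trap by approximating with $\tilde{b}_{n}=b\ast \delta _{n}$, where $\delta _{n}$ is a nonnegative approximate identity supported in $\left\vert x\right\vert <\alpha _{n}\rightarrow 0$. Because $\delta _{n}$ has compact support, associativity gives $\tilde{g}_{n}\ast \tilde{f}_{n}=\left( g\ast f\right) \ast \delta _{n}\ast \delta _{n}$, so the limit passage reduces to convolving the \emph{fixed} element $g\ast f$ with a compactly supported delta-sequence, and bounded or null sets in the enlarged pair $(\tilde{A},\tilde{B})$ are obtained by mollifying bounded or null sets of the original pair, which preserves these properties. It then invokes the Hirata--Ogata exchange formula for such mollified sequences rather than the elementary $S\times S$ formula. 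If you want to keep your route you would have to build $g_{n},f_{n}$ with this kind of uniform control (mollify and cut off carefully, with explicit tail estimates for the truncation error) rather than invoke abstract density --- at which point you are essentially reconstructing the Mikusinski--Hirata--Ogata argument the paper uses.
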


\begin{proof}
Consider the special sequences\ studied by Mikusinski (Antosik et al, 1973)
and Hirata and Ogata (1958) that are defined for a generalized function $b$
as $\tilde{b}_{n}=b\ast \delta _{n}$ with $\delta _{n}$ representing the
following delta-convergent sequence: for a number sequence: $\alpha _{n}>0$
and $\alpha _{n}\rightarrow 0,$ the regular function $\delta _{n}(x)$ is
non-negative with support in $\left\vert x\right\vert <\alpha _{n}$ and $%
\int \delta _{n}(x)dx=1;$ the convolution with $\delta _{n}\in E^{\ast }$ is
always defined. Moreover, $\tilde{b}_{n}\rightarrow b$ in $S^{\ast }:$
indeed, $\left( b\ast \delta _{n},\psi \right) =\left( b,\psi \ast \delta
_{n}\right) ,$ but $\psi \ast \delta _{n}$ is in $S$ and converges there to $%
\psi ,$ so $\left( b\ast \delta _{n},\psi \right) $ converges to $\left(
b,\psi \right) .$

Since $g,f\,\ $belong to a convolution pair of spaces $\left( A,B\right) $,
we can enlarge the convolution pair $\left( A,B\right) $ to include with
every $b\in A$ or $B$ the corresponding sequences $\tilde{b}_{n}.$ Denote
the resulting pair of spaces by $\left( \tilde{A},\tilde{B}\right) .$ Show
that this is also a convolution pair. Note that the convolution $\tilde{g}%
_{n}\ast \tilde{f}_{n}$ is defined as support of $\delta _{n}$ is bounded.
All that is needed is to show hypocontinuity; it follows from the fact that
if a~set $T$ from $A$ or $B~$is bounded in $S^{\ast }$ so is the
corresponding set $\tilde{T}$ that contains every element $b\in T$ as well
as all $\tilde{b}_{n}.$ If a sequence $b_{m}$ converges to zero in $S^{\ast
} $ so does the corresponding $\tilde{b}_{mn},$ and hypocontinuity extends
to the enlarged convolution pair.

To show the exchange formula we need to establish that for any $\left(
g,f\right) \in \left( A,B\right) $ we get that $Ft\left( g\right) Ft\left(
f\right) =Ft\left( g\ast f\right) .$

Start by the exchange formula of Hirata and Ogata for the sequences:

\begin{equation*}
\lim Ft(\tilde{g}_{n}\ast \tilde{f}_{n})=\lim Ft(\tilde{g}_{n})\lim Ft(%
\tilde{f}_{n}).
\end{equation*}%
Consider first the left-hand side; by the continuity of Fourier transform in 
$S^{\ast }$ 
\begin{equation*}
\lim Ft(\tilde{g}_{n}\ast \tilde{f}_{n})=Ft(\lim (\tilde{g}_{n}\ast \tilde{f}%
_{n})),
\end{equation*}%
then by hypocontinuity of the convolution and because $\tilde{f}%
_{n}\rightarrow f,$ $\tilde{g}_{n}\rightarrow g$ we get that this is $%
Ft\left( g\ast f\right) .$

On the right hand side by continuity of Fourier transform and convergence 
\begin{equation*}
\lim Ft(\tilde{g}_{n})\lim Ft(\tilde{f}_{n})=Ft\left( g\right) Ft\left(
f\right) .
\end{equation*}%
Denote the space of Fourier transforms of elements from $A$ $\left( \text{or 
}B\right) $ by $Ft\left( A\right) $ (correspondingly, $Ft\left( B\right) ).$
Then $\left( Ft\left( A\right) ,Ft\left( B\right) \right) $ is a product
pair. Hypocontinuity of the product follows immediately from the
hypocontinuity of the convolution and continuity of the Fourier transform
and its inverse.
\end{proof}

\textbf{Corollary 1.} \textit{Given a product pair of spaces }$\left( \Gamma
,\Phi \right) $\textit{\ in }$S^{\ast }$\textit{\ the exchange formula
applies to any }$\gamma ,\phi \in \left( \Gamma ,\Phi \right) :$%
\begin{equation*}
Ft^{-1}\left( \gamma \phi \right) =Ft^{-1}\left( \gamma \right) \ast
Ft^{-1}\left( \phi \right) .
\end{equation*}%
The proof follows the proof of Theorem 1 above by replacing convolution with
product and Fourier transform by the inverse Fourier transform.

In examining problems such as deconvolution much of the literature focuses
on Fourier transforms, e.g. characteristic functions. The dichotomy between
convolution pairs of spaces and product pairs of spaces in $S^{\ast }$
allows to switch between the two types of pairs.

If $\phi $ is the characteristic function of a measurement or contamination
error the condition $\phi \in \mathcal{O}_{M}$ for the classical product
pair it would require existence of all moments. It may be of interest to
consider pairs where products of Fourier transforms of generalized functions
exist for less smooth functions (e.g. with relaxed moments requirements on
measurement error); relaxing the smoothness of $\phi $ will require
restricting the degree of singularity of $\gamma .$

For a continuous function $\phi \in C^{(0)}$ define the space of test
functions, denoted $G\oplus \phi G,$ that consists of functions that can be
represented as $\psi _{1}+\phi \psi _{2},$ with $\psi _{i}\in G$ ($G=D$ or $%
S)$. Consider $\gamma \in G^{\ast }$ that can be extended to a continuous
linear functional on $G\oplus \phi G,$ denote the linear space generated by
such $\gamma $ by $G(\phi )^{\ast }.$ If $\phi \in G$ then $G(\phi )^{\ast
}=G^{\ast }$. For any $\psi \in G$ the value $(\gamma ,\phi \psi )$ is
defined and is a continuous functional with respect to $\psi ;$ this defines
the generalized function $\phi \gamma :$ \ $(\phi \gamma ,\psi )=(\gamma
,\phi \psi ).$ Then $\left( G\left( \phi \right) ^{\ast },G\oplus \phi
G\right) $ is a product pair; the corresponding spaces of inverse Fourier
transforms form a convolution pair.

For example, the derivative, $\delta ^{\prime },$ of a univariate Dirac $%
\delta -$function in $G^{\ast }$ can be multiplied by any continuous
function, $\phi ,$ that is differentiable at 0, since then $(\delta ^{\prime
}\phi ,\psi )=(\delta ^{\prime },\phi \psi )$ with $(\delta ^{\prime },\phi
\psi )=\phi ^{\prime }(0)\psi (0)+\phi (0)\psi ^{\prime }(0).$ More
generally, for a product between a continuous function and a generalized
function to be defined, there is a trade-off between differentiability of
the continuous function and the degree of singularity of the generalized
function.

Under the Assumptions 1-3 the convolution equations (\ref{conv},\ref{newey2}%
) lead to corresponding equations for Fourier transforms: 
\begin{equation}
\gamma \cdot \phi =\varepsilon  \label{eq1}
\end{equation}%
or to the system of equations%
\begin{eqnarray}
\gamma \cdot \phi &=&\varepsilon _{1}  \label{eq2} \\
\gamma _{k}^{\prime }\cdot \phi &=&i\varepsilon _{2k},\text{ }k=1,...,d. 
\notag
\end{eqnarray}

\section{Solutions to the convolution equations: identification and
well-posedness}

\subsection{Identification}

For identification the supports of the functions in the equations play an
important role.

Recall that for a continuous function $\psi (x)$ on $R^{d}$ support is
defined as the set $W=$supp($\psi ),$ such that 
\begin{equation*}
\psi (x)=\left\{ 
\begin{array}{cc}
a\neq 0 & \text{for }x\in W \\ 
0 & \text{for }x\in R^{d}\backslash W.%
\end{array}%
\right.
\end{equation*}%
Support of a continuous function is an open set.

Since generalized functions can be considered as functionals on the space $S$
support of a generalized function $b\in S^{\ast }$ is defined as follows
(Schwartz, 1966, p. 28). Denote by $\left( b,\psi \right) $ the value of the
functional $b$ for $\psi \in S.$ Consider open sets $W$ with the property
that for any $\psi \in S:$ supp$\left( \psi \right) =W$ the value of the
functional $\left( b,\psi \right) =0$; then define the null set for $b$ as
the union of all such sets: $\Omega =\cup W.$ Then supp$\left( b\right)
=R^{d}\backslash \Omega .$ Note that a generalized function has support in a
closed set, for example, support of the $\delta -$function is just one point
0.

We start with deconvolution for (\ref{conv}). In the deconvolution problem $%
f,$ or equivalently its Fourier transform, $\phi ,$ is assumed to be given.
Typically, $\phi $ is a characteristic function and thus is continuous and
bounded.

The convolution equation (\ref{conv}) uniquely identifies $g$ for a known $f$
if it can be shown that the corresponding equation (\ref{eq1}) has meaning
and can be uniquely solved for $\gamma .$

Next a useful Lemma is proved. This Lemma shows that if a product between a
generalized function $\gamma $ and a continuous function $\phi $ is defined: 
$\varepsilon =\gamma \phi ,$ then division of $\varepsilon $ by $\phi $ is
uniquely defined on supp$\left( \phi \right) $ in $D^{\ast }.$

\textbf{Lemma 1. }\textit{Suppose that }$\varepsilon =\gamma \phi $ in $%
D^{\ast };$ $\phi $ \textit{is a continuous function. Then the generalized
function }$\phi ^{-1}\varepsilon $\textit{\ is uniquely defined in }$D^{\ast
}$ \textit{on supp}$\left( \phi \right) .$

Proof. Denote $W=$supp$\left( \phi \right) $ and consider $D\left( W\right) $
the space of all the functions in $D$ with supports restricted to belong to $%
W$; the space $D^{\ast }\left( W\right) $ is the dual space for $D\left(
W\right) .$ Next, consider a covering of the open set $W$ by bounded sets: $%
W=\cup W_{\nu }$ where each $W_{\nu }$ is an open bounded set. Similarly,
consider $D(W_{\nu }).$ Then any generalized function in $D^{\ast }$ can be
restricted to the dual space $D^{\ast }(W_{\nu }).$

In $D^{\ast }(W_{\nu })$ the generalized function $\gamma $ solves $%
\varepsilon =\gamma \phi .$ Suppose that the solution is not unique and
there is $\tilde{\gamma}\neq \gamma $ such that $\tilde{\gamma}\phi
=\varepsilon .$ Then $\left( \gamma -\tilde{\gamma}\right) \phi $ is defined
and represents a zero element in $D^{\ast }\left( W_{\nu }\right) .$ A zero
functional can be extended from the space $D\left( W_{\nu }\right) $ to a
zero functional on the space of continuous functions on $W_{\nu },$ $%
C^{\left( 0\right) }\left( W_{\nu }\right) \subset C^{\left( 0\right)
}\left( R^{d}\right) .$ Then ($\left( \gamma -\tilde{\gamma}\right) \phi
,\psi )=0$ for any $\psi \in C^{\left( 0\right) }\left( W_{\nu }\right) ,$
but since $\phi $ is invertible as a continuous function on $W_{v}$ any
continuous function $\psi $ in $C^{\left( 0\right) }\left( W_{\nu }\right) $
has the representation $\phi \tilde{\psi}$ for some $\tilde{\psi}\in
C^{\left( 0\right) }\left( W_{\nu }\right) .$ This implies that $\left(
\gamma -\tilde{\gamma},\psi \right) $ is defined (equals zero) for any $\psi 
$ thus the functional $\gamma -\tilde{\gamma}$ extends to $C^{\left(
0\right) }\left( W_{\nu }\right) $ as a zero functional. This is only
possible if $\gamma -\tilde{\gamma}$ is a zero functional in $D^{\ast
}(W_{\nu }).$ Then $\gamma -\tilde{\gamma}$ is a zero generalized function
in $D^{\ast }(W).\blacksquare $

Recall that the difference between $D^{\ast }$ and $S^{\ast }$ is in the
"tail behavior" only; on bounded sets the two spaces coincide. The following
theorem provides deconvolution in $S^{\ast }.$

\begin{theorem}
Under Assumptions 1 and 2 assume that $\phi =Ft(f)$ is a known continuous
function; then for any $W$ where supp($\phi )\supset W,$ the Fourier
transform of $g,$ $\gamma ,$ is uniquely defined on $W;$ if it is further
known that supp($\gamma )=W,$ then $g$ is uniquely defined in $S^{\ast }$.
Uniqueness holds automatically if supp($\phi )=R^{d}.$
\end{theorem}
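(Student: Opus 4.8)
The plan is to reduce the statement to the machinery already in place: convert the convolution equation into its multiplicative form via the exchange formula, invert the multiplier $\phi$ locally by Lemma 1, and then dispose of the tail behavior using the support hypotheses together with the isomorphism property of the Fourier transform on $S^{\ast}$.

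First I would apply Theorem 1. Under Assumptions 1 and 2 the pair $(g,f)$ lies in a convolution pair $(A,B)$, so the exchange formula holds and $g\ast f=w$ transforms into $\gamma\cdot\phi=\varepsilon$ in $S^{\ast}$, where $\gamma=Ft(g)$, $\phi=Ft(f)$ and $\varepsilon=Ft(w)$. Since $S^{\ast}\subset D^{\ast}$, this product equation holds a fortiori in $D^{\ast}$, and $\phi$ is continuous by hypothesis, so the hypotheses of Lemma 1 are met verbatim. Invoking Lemma 1, the solution $\gamma=\phi^{-1}\varepsilon$ is uniquely determined in $D^{\ast}$ on $\mathrm{supp}(\phi)$: if two candidates $\gamma_{1},\gamma_{2}$ satisfy the product equation, their difference obeys $(\gamma_{1}-\gamma_{2})\phi=0$, and the argument of Lemma 1 forces $\gamma_{1}-\gamma_{2}$ to vanish on every test function supported inside $\mathrm{supp}(\phi)$. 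In particular, for any $W$ with $\mathrm{supp}(\phi)\supset W$, the restriction of $\gamma$ to $W$ is pinned down. Because $D^{\ast}$ and $S^{\ast}$ differ only in admissible tail behavior and coincide on bounded sets, and because the proof of Lemma 1 proceeds by covering $\mathrm{supp}(\phi)$ with bounded pieces $W_{\nu}$, this local uniqueness reads identically in $S^{\ast}$; so $\gamma$ is uniquely defined on $W$ as an element of $S^{\ast}$.

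The last and most delicate step is to upgrade uniqueness of $\gamma$ on $W$ to uniqueness of $g$ in $S^{\ast}$, and here the support hypotheses do the essential work. Off $\mathrm{supp}(\phi)$ the multiplier vanishes, so the equation $\gamma\phi=\varepsilon$ carries no information about $\gamma$ there; this indeterminacy is precisely the genuine source of non-uniqueness in deconvolution. If it is further assumed that $\mathrm{supp}(\gamma)=W\subset\mathrm{supp}(\phi)$, then $\gamma$ vanishes on every test function supported in the complement of $W$ while being determined on $W$, hence $\gamma$ is fixed as a functional on all of $S$. Applying $Ft^{-1}$, a continuous isomorphism of $S^{\ast}$, recovers $g=Ft^{-1}(\gamma)$ uniquely in $S^{\ast}$. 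When $\mathrm{supp}(\phi)=R^{d}$ the complement is empty, Lemma 1 already determines $\gamma$ everywhere, and no auxiliary condition on $\mathrm{supp}(\gamma)$ is needed, so uniqueness is automatic.

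I expect the control of $\gamma$ outside $\mathrm{supp}(\phi)$ to be the main obstacle: Lemma 1 delivers local invertibility wherever $\phi\neq0$, but the passage from a locally determined $\gamma$ to a globally unique $g$ cannot be closed by the equation alone—only a support restriction on $\gamma$, or $\phi$ having full support, bridges the gap. The remaining transitions (the exchange formula and the $D^{\ast}$-to-$S^{\ast}$ identification on bounded sets) are routine given the earlier results.
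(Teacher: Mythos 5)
Your proposal is correct and follows essentially the same route as the paper's own proof: invoke Theorem 1 to pass to the product equation $\gamma\phi=\varepsilon$, apply Lemma 1 to obtain uniqueness of $\gamma$ on $W\subset\mathrm{supp}(\phi)$ in $D^{\ast}(W)$ (hence in $S^{\ast}$ restricted there), use the a priori support restriction $\mathrm{supp}(\gamma)=W$ to pin $\gamma$ down globally, and recover $g=Ft^{-1}(\gamma)$ by the isomorphism property of the Fourier transform on $S^{\ast}$. Your added remarks on why the support hypothesis is the essential bridge are accurate but do not change the argument.
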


\begin{proof}
By Theorem 1, (\ref{eq1}) holds in $S^{\ast }$. Since supp($\phi )\supset W$
by Lemma 1 a unique solution $\gamma $ to (\ref{eq1}) exists in $D^{\ast
}\left( W\right) $. Since $\gamma $ is the Fourier transform of $g\in
S^{\ast }$, $\gamma \in S^{\ast }$ and defines an element in $S^{\ast
}(W)\subset D^{\ast }\left( W\right) $ uniquely. If support of $\gamma $ is
restricted to $W$ a priori, then the solution is unique, thus when supp($%
\phi )=R^{d},$ the generalized function $\gamma $ is defined uniquely in $%
S^{\ast }.$ An inverse Fourier transform exists in $S^{\ast }$ for any $%
\gamma $. It is then possible to recover $g$ by the inverse Fourier
transform $g=Ft^{-1}(\gamma )$, thus $g$ is uniquely defined whenever $%
\gamma $ is.
\end{proof}

\bigskip Generally by this Theorem identification in deconvolution holds
whenever supp($\phi )=R^{d}.$ However, this excludes some error
distributions such as the uniform or the triangular, where the
characteristic function has isolated zeros. A more general result (Schwartz,
1966, pp.123-125) establishes the possibility of division by $\phi $ even
when $\phi $ has zeros. In the one-dimensional case as long as $\phi $ is
infinitely differentiable, the zeros are isolated and there exists a finite
order derivative that is non-zero at every zero point of $\phi ,$ any
generalized function can be divided by $\phi .$ Thus since the uniform and
triangular distributions have this property, deconvolution with these error
distributions is also identified in $S^{\ast }.$

The next Theorem examines identification in the case when there are two
unknown functions in the system $\left( \ref{newey2}\right) ;$ under
Assumptions 1-3 it leads to $\left( \ref{eq2}\right) $. The main conditions
are continuous differentiability of one of $\gamma ,$ or $\phi ,$
assumptions about support and knowledge of the value of the differentiable
function at an interior point. If that function is a characteristic
function, its value at 0 is always 1.

\begin{theorem}
Under Assumptions 1-3 for the system of equations $\left( \ref{eq2}\right) $
if supp($\phi )\supset $supp($\gamma )=W,$\ where $W$ is a connected set in $%
R^{d}$ that includes $0$ as an interior point, and

(a) if $\gamma $ is continuously differentiable in $W,$ $\gamma (0)=c,$ then 
$\gamma $ is uniquely defined on $W$ by%
\begin{equation}
\gamma (s)=c\exp \int_{0}^{s}\tsum {}_{k=1}^{d}\varkappa _{k}(t)dt_{k},
\label{sola}
\end{equation}%
with the uniquely defined continuous functions $\varkappa _{k}$ that solve 
\begin{equation*}
\varkappa _{k}\varepsilon _{1}-i\varepsilon _{2k}=0,k=1,...,d;
\end{equation*}%
or

(b) if $\phi $ is continuously differentiable in $W$, $\phi (0)=1$, then $%
\gamma $ is uniquely defined on $W$ by 
\begin{equation}
\gamma =\tilde{\phi}^{-1}\varepsilon _{1},  \label{solb}
\end{equation}%
where 
\begin{equation*}
\tilde{\phi}(s)=\exp \int_{0}^{s}\tsum {}_{k=1}^{d}\tilde{\varkappa}%
_{k}(t)dt_{k},
\end{equation*}%
with the uniquely defined on $W$ continuous functions $\tilde{\varkappa}_{k}$
that solve 
\begin{equation*}
\varepsilon _{1}\tilde{\varkappa}_{k}-(\left( \varepsilon _{1}\right)
_{k}^{\prime }-i\varepsilon _{2k})=0,k=1,...,d.
\end{equation*}%
Then $g=Ft^{-1}(\gamma ).$ If supp($\gamma )=W,$ then $g$ is uniquely
defined. Uniqueness holds automatically if $W=R^{d}.$ If supp($\phi )=W,$
then $\phi $ is also uniquely defined; and so is $f=Ft^{-1}(\phi ).$
\end{theorem}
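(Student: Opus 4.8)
The plan is to restrict the system $(\ref{eq2})$ to $W$, eliminate the unknown that is \emph{not} assumed smooth, reduce the problem to a first-order linear system for the logarithmic derivative of the smooth unknown, solve that system by the division result of Lemma~1, and finally integrate out from the interior point $0$. Cases (a) and (b) run in parallel with the roles of $\gamma$ and $\phi$ interchanged. Throughout I would use that by Theorem~1 the identities $\gamma\phi=\varepsilon_{1}$ and $\gamma_{k}^{\prime}\phi=i\varepsilon_{2k}$ hold in $S^{\ast}$, hence on $W$, and that on bounded sets $D^{\ast}$ and $S^{\ast}$ agree, so Lemma~1 is available.

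For part (a) the key step is to eliminate $\phi$. Setting $\varkappa_{k}:=\gamma_{k}^{\prime}/\gamma$ and multiplying $\gamma_{k}^{\prime}=\varkappa_{k}\gamma$ by $\phi$ gives, via both equations, $\varkappa_{k}\varepsilon_{1}=i\varepsilon_{2k}$, which is the stated defining relation and involves only the known $\varepsilon_{1},\varepsilon_{2k}$. Since $\gamma$ is continuously differentiable and $\phi$ continuous, $\varepsilon_{1}=\gamma\phi$ is a continuous function, so Lemma~1 applies with $\varepsilon_{1}$ as the continuous divisor: $\varkappa_{k}$ is uniquely determined on $\mathrm{supp}(\varepsilon_{1})=W$ and coincides with the continuous function $\gamma_{k}^{\prime}/\gamma$. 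The relation $\gamma_{k}^{\prime}=\varkappa_{k}\gamma$ together with $\gamma(0)=c$ is then a first-order linear system, and I would observe that $\sum_{k}\varkappa_{k}\,dt_{k}=d\log\gamma$ is exact, so the line integral in $(\ref{sola})$ is path-independent on the connected $W$; evaluating it yields $(\ref{sola})$, and any two continuously differentiable solutions sharing the same $\varkappa_{k}$ and the same value $c$ at the interior point $0$ must agree, giving uniqueness.

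For part (b) I would interchange the roles. Differentiating $\gamma\phi=\varepsilon_{1}$ and substituting $\gamma_{k}^{\prime}\phi=i\varepsilon_{2k}$ gives $\gamma\phi_{k}^{\prime}=(\varepsilon_{1})_{k}^{\prime}-i\varepsilon_{2k}$; writing $\tilde{\varkappa}_{k}:=\phi_{k}^{\prime}/\phi$ (continuous near $0$ as $\phi(0)=1$) and using $\gamma\phi=\varepsilon_{1}$ produces the stated relation $\varepsilon_{1}\tilde{\varkappa}_{k}=(\varepsilon_{1})_{k}^{\prime}-i\varepsilon_{2k}$. Here $\varepsilon_{1}=\gamma\phi$ may be singular, so rather than invoking Lemma~1 to divide by $\varepsilon_{1}$ I would argue uniqueness by the support argument underlying Lemma~1: a continuous $u$ with $\varepsilon_{1}u=0$ must vanish on $\mathrm{supp}(\varepsilon_{1})=W$. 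Integrating $\phi_{k}^{\prime}=\tilde{\varkappa}_{k}\phi$ from $\phi(0)=1$ gives $\phi=\tilde{\phi}$, whence $\gamma=\tilde{\phi}^{-1}\varepsilon_{1}$ by dividing $\varepsilon_{1}$ by the continuous nonvanishing $\tilde{\phi}$ (Lemma~1 again). In both cases $g=Ft^{-1}(\gamma)$ is recovered by the $S^{\ast}$-isomorphism; imposing $\mathrm{supp}(\gamma)=W$ (automatic when $W=R^{d}$) pins down $g$ uniquely, and if also $\mathrm{supp}(\phi)=W$ then $\phi$, hence $f=Ft^{-1}(\phi)$, is uniquely determined.

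The main obstacle I anticipate is the passage from the purely algebraic generalized-function identities to an honest pointwise first-order PDE system: namely, verifying that the quotients $\varkappa_{k}$ (resp.\ $\tilde{\varkappa}_{k}$) are genuine continuous functions and that the exactness needed to make $(\ref{sola})$ well defined holds on $W$. This rests on Lemma~1 and on $\gamma$ (resp.\ $\phi$) being nonvanishing on $W$: nonvanishing near $0$ is guaranteed by $\gamma(0)=c$ (resp.\ $\phi(0)=1$) and continuity, and the never-vanishing exponential representation then propagates it across the connected set $W$, which is precisely why connectedness and the interior normalization at $0$ are needed.
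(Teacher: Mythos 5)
Your overall route coincides with the paper's: eliminate the unknown that is not assumed smooth, reduce the system to the scalar relations $\varkappa _{k}\varepsilon _{1}=i\varepsilon _{2k}$ (resp.\ $\varepsilon _{1}\tilde{\varkappa}_{k}=(\varepsilon _{1})_{k}^{\prime }-i\varepsilon _{2k}$), establish uniqueness of the continuous $\varkappa _{k}$, integrate the logarithmic derivative from the interior point $0$, and in part (b) finish by deconvolving with the now-known continuous non-vanishing $\tilde{\phi}$. The genuine gap is in your part (a). You write that ``$\gamma $ is continuously differentiable and $\phi $ continuous, so $\varepsilon _{1}=\gamma \phi $ is a continuous function, so Lemma 1 applies with $\varepsilon _{1}$ as the continuous divisor.'' But part (a) is precisely the case in which no continuity is assumed on $\phi $: it is an arbitrary generalized function with supp$(\phi )\supset W$, so $\varepsilon _{1}=\gamma \phi $ may be singular, and Lemma 1 --- whose hypothesis is that the \emph{divisor} is a continuous function --- does not apply. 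The step that is actually needed is the dual statement: a continuous function $u$ on $W$ with $u\,\varepsilon _{1}=0$ for a generalized $\varepsilon _{1}$ that is non-zero on $W$ must vanish identically on $W$. You invoke exactly this in part (b), calling it ``the support argument underlying Lemma 1,'' but you neither prove it nor note that it is not a consequence of Lemma 1: Lemma 1's proof inverts the known continuous divisor, whereas here the continuous factor is the unknown and the known factor may be singular.

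The paper supplies this missing argument separately (and uses it in both parts): if two continuous solutions $\varkappa _{k1}\neq \varkappa _{k2}$ existed, their difference would be bounded away from zero on some closed convex $U\subset W$; multiplication by a non-vanishing continuous function is an isomorphism of the space $D_{0}(U)$ of continuous test functions, so $\left( \varepsilon _{1}(\varkappa _{k1}-\varkappa _{k2}),\psi \right) =0$ for all $\psi $ would force $\varepsilon _{1}$ to be the zero functional on $D(U)$, contradicting $\varepsilon _{1}=\gamma \phi \neq 0$ on $W$. Once you run this argument in part (a) as well as part (b) --- it is identical in the two cases --- the remainder of your proposal (exactness of $\sum_{k}\varkappa _{k}dt_{k}$ and path-independence on the connected $W$, uniqueness of the solution of $\gamma ^{-1}\gamma _{k}^{\prime }=\varkappa _{k}$ with $\gamma (0)=c$, and the final division of $\varepsilon _{1}$ by $\tilde{\phi}$) matches the paper's proof. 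One further small caution: non-vanishing of $\gamma $ (resp.\ $\phi $) on all of $W$ should be taken from the definition of the support of a continuous function, not derived from the exponential representation, since that representation is only obtained after non-vanishing is known.
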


\begin{proof}
(a) Consider the space of generalized functions $D^{\ast }(W)$ (defined in
proof of Lemma 1). Since continuous $\gamma $ is non-zero on $W$ by Lemma 1
(reversing the roles of $\phi $ and $\gamma $ there) in $D^{\ast }\left(
W\right) $ the generalized function $\phi $ is uniquely expressed as $\gamma
^{-1}\varepsilon _{1}.$ Substitute this expression for $\phi $ into the
differential equations in (\ref{eq2}), and denote the continuous functions $%
\gamma _{k}^{\prime }\gamma ^{-1}$ by $\varkappa _{k}(\xi )$ to obtain
equations 
\begin{equation}
\varkappa _{k}(\xi )\varepsilon _{1}-i\varepsilon _{2k}=0,k=1,...,d
\label{kappaa}
\end{equation}%
where the left-hand side is defined and equals zero in $S^{\ast }.$

We can show that the function $\varkappa _{k}$ is uniquely determined in the
class of continuous functions on $W$ by the equation (\ref{kappaa})$.$ Proof
is by contradiction. Suppose that there are two distinct continuous
functions on supp($\gamma ),$ $\varkappa _{k1}\neq \varkappa _{k2}$ that
satisfy ((\ref{kappaa})). Then $\varkappa _{k1}(\bar{x})\neq \varkappa _{k2}(%
\bar{x})$ for some $\bar{x}\in supp(\gamma ).$ Without loss of generality
assume that $\bar{x}$ is in the interior of $W;$ by continuity $\varkappa
_{k1}\neq \varkappa _{k2}$ everywhere for some closed convex $U\subset W.$
Consider now $D(U)^{\ast };$ we can write

\begin{equation*}
(\varepsilon _{1}(\varkappa _{k1}-\varkappa _{k2}),\psi )=0
\end{equation*}%
for any $\psi \in D(U).$ A generalized function that is zero for all $\psi
\in D(U)$ coincides with the ordinary zero function on $U$ and is also zero
for all $\psi \in D_{0}(U)$, where $D_{0}(U)$ denotes the space of
continuous test functions on $U.$ For the space of test functions $D_{0}(U)$
multiplication by continuous $(\varkappa _{k1}-\varkappa _{k2})\neq 0$ is an
isomorphism. Then we can write%
\begin{equation*}
0=(\left[ \varepsilon _{1}(\varkappa _{k1}-\varkappa _{k2})\right] ,\psi
)=(\varepsilon _{1},(\varkappa _{k1}-\varkappa _{k2})\psi )
\end{equation*}%
implying that $\varepsilon _{1}$ is defined and is a zero generalized
function in $D_{0}(U)^{\prime }.$ If that were so $\varepsilon _{1}$ would
be a zero generalized function in $D(U)^{\ast }$ since $D(U)\subset D_{0}(U)$
but this is not possible since $\varepsilon _{1}=\gamma \phi ,$ which is
non-zero by assumption$.$

Next we show that $\gamma $ is then uniquely determined on $W.$ Indeed, for
any $t,s\in $supp($\gamma )$ with the $kth$ coordinate denoted $t_{k},$
write the continuous function 
\begin{equation*}
\gamma (s)=c\exp \int_{0}^{s}\tsum {}_{k=1}^{d}\varkappa _{k}(t)dt_{k},
\end{equation*}%
where integration is along any arc joining $0$ and $s$ in $W.$ This is the
unique solution to $\gamma (0)=c,$ $\gamma ^{-1}\gamma _{k}^{\prime
}=\varkappa _{k}$ (see, e.g., Schwartz, 1966, p.61). Then in $S^{\ast }$ $%
g=Ft^{-1}(\gamma )$ is uniquely defined.

(b) In view of the result in Theorem 2 it is sufficient to show that $\phi $
is uniquely determined on $W.$ Consider the space of generalized functions $%
D^{\ast }(W)$. Since $\phi $ is non-zero on $W$ and continuously
differentiable, then by differentiating the first equation in (\ref{eq2}),
substituting from the second equation and multiplying by $\phi ^{-1}$ in $%
D^{\ast }(W)$ (where the product exists as shown in Lemma 1) we get that the
generalized function%
\begin{equation*}
\varepsilon _{1}\phi ^{-1}\phi _{k}^{\prime }-(\left( \varepsilon
_{1}\right) _{k}^{\prime }-i\varepsilon _{2k})
\end{equation*}%
equals zero in the sense of generalized functions, in $D^{\ast }(W).$ Note
that by assumption $\varepsilon _{1}$ cannot be zero on $W$ and both $%
\varepsilon _{1}$ and $\varepsilon _{2k}~$are zero outside of $W.$ Define $%
\tilde{\varkappa}_{k}=\phi _{k}^{\prime }\phi ^{-1};$ then $\tilde{\varkappa}%
_{k}$ is continuous on supp($\gamma )$ and is a regular function that
satisfies the equation 
\begin{equation}
\varepsilon _{1}\tilde{\varkappa}_{k}-(\left( \varepsilon _{1}\right)
_{k}^{\prime }-i\varepsilon _{2k})=0.  \label{kappa}
\end{equation}%
We can show that the function $\tilde{\varkappa}_{k}$ is uniquely determined
in the class of continuous functions on $W$ by the equation (\ref{kappa})$.$
Proof is identical to the proof in part (a) for equation $\left( (\ref%
{kappaa})\right) .$ Next we show that $\phi $ is then uniquely determined on 
$W.$ Indeed, the continuous function 
\begin{equation*}
\tilde{\phi}(\zeta )=\exp \int_{0}^{s}\tsum {}_{k=1}^{d}\varkappa
_{k}(t)dt_{k},
\end{equation*}%
is the unique solution to $\tilde{\phi}(0)=1,$ $\tilde{\phi}_{k}^{-1}\tilde{%
\phi}^{\prime }=\varkappa _{k}$; then since $\varkappa _{k}(=\phi
_{k}^{\prime }\phi ^{-1})$ is uniquely determined on $W,$ so is $\phi $ on $%
W $ where it coincides with $\tilde{\phi}.$

By Theorem 2, then $\gamma $ is then uniquely defined on $W.$ Then in $%
S^{\ast }$ $g=Ft^{-1}(\gamma )$ is uniquely defined.
\end{proof}

This Theorem extends the identification results of Schennach (2007) and
Zinde-Walsh(2009) to the multivariate case and to the case when $\gamma $
rather than $\phi $ is continuously differentiable (part (a) of this
Theorem), and extends the identification result of Cunha et al (2011) by
showing that in the model with several measurements considered there
identification additionally holds with the requirement of continuous
differentiability of $\phi $ replacing the requirement that $\gamma $ be
continuously differentiable (part (b)).

\subsection{Well-posedness of the deconvolution in $S^{\ast }$}

Well-posedness requires that a unique solution to the problem exist and that
this solution be continuous in some "reasonable topology" (Hadamard, 1923).
Here most of the results consider the topology of generalized functions
which is weaker than, say, the uniform or $L_{1}$ norm for corresponding
subspaces, thus well-posedness may hold in this topology when it does not
hold in the usual norms. But well-posedness may not always obtain even in
this weak topology. An example is provided that involves deconvolution with
a supersmooth distribution.

Consider a convolution pair of spaces $\left( A,B\right) $ in $S^{\ast },$
denote by $C$ the space of elements in $S^{\ast }$ represented by
convolutions of generalized functions from $\left( A,B\right) .$
Well-posedness of the deconvolution would require that for the given $f\in B$
and any sequence of $w_{n}\in C$ that converges to some $w\in C$ in the
topology of $S^{\ast }:w_{n}\rightarrow w,$ the sequence of $g_{n}\in A$
such that $g_{n}\ast f=w_{n}$ would converge to $g\in A,$ with $g\ast f=w.$
By Theorem 1\ this convergence can be restated for the product pair of
spaces $\left( \Gamma ,\Phi \right) $ where the product pair is the image of
the convolution pair $\left( A,B\right) $ under Fourier transform; denote by 
$\Pi \in S^{\ast }$ the space of products of elements from the pair $\left(
\Gamma ,\Phi \right) $. Then well-posedness can be restated in terms of the
Fourier transforms: the problem is well-posed in $S^{\ast }$ if for any $%
\varepsilon _{n}\rightarrow \varepsilon $ where $\varepsilon
_{n},\varepsilon \in \Pi $ the corresponding sequence $\gamma _{n}=\phi
^{-1}\varepsilon _{n}$ converges to $\gamma =\phi ^{-1}\varepsilon .$

If $\phi ^{-1}\in O_{M}$ then by hypocontinuity of the product, for any
sequence $\varepsilon _{n}-\varepsilon $ that converges to zero in $S^{\ast }
$ the corresponding sequence $\gamma _{n}-\gamma $ also converges to zero
and well-posedness obtains without any restrictions on $\gamma $. This
result could also extend to infinitely differentiable $\phi $ with some
zeros and applies in the univariate deconvolution to error distributions
such as the uniform and triangular (see Schwartz, pp. 123-125).

Below we provide cases of well-posed deconvolution where less restrictive
differentiability conditions are imposed on $\phi ;$ this would require
additional conditions on the $\gamma $'s. The nature of the conditions is to
ensure that the product pair is defined for the Fourier transforms; for a
continuous $\phi $ this requires a trade-off between the degree of
singularity of $\gamma $ (and correspondingly, $\varepsilon )$ and the
differentiability of $\phi ,$ these trade-offs can occur locally as in the
example where $\gamma $ is the derivative of a $\delta -$function and $\phi $
is continuously differentiable at 0, but to streamline the proofs we
consider global restrictions in the product pairs.

First, suppose that both $\phi $ and $\gamma $ are continuos functions, then 
$\varepsilon $ is continuous as well. Let $\Gamma $ be a subspace of all
continuous functions on $R^{d}$ such that all $\gamma \in \Gamma $ belong in
a bounded set in $S^{\ast };$ then for some $\bar{m}$ we have $\Gamma =S_{0,%
\bar{m}}^{\ast }(V)$ (implying $\left( \left( 1+t^{2}\right) ^{-1}\right) ^{%
\bar{m}}\left\vert \gamma \right\vert <V<\infty ).$ Then for any bounded
continuous $\phi $ the product $\varepsilon =\gamma \phi \in S_{0,\bar{m}%
}^{\ast }(V)$\textit{. }

\textbf{Lemma 2.} \textit{Suppose that }$\gamma ,\gamma _{n}\in S_{0,\bar{m}%
}^{\ast }(V),$ $\phi $ \textit{is a bounded continuous function; }$%
\varepsilon _{n}=\gamma _{n}\phi ,\varepsilon =\varepsilon _{0}=\gamma \phi
,0\leq n<\infty $ \textit{\ and }$\phi ^{-1}$\textit{\ is a continuous
function that satisfies (\ref{conds}). Then if }$\varepsilon _{n}\rightarrow
\varepsilon _{0}$\textit{\ as }$n\rightarrow \infty $ \textit{in }$S^{\ast }$%
\textit{\ we get that }$\gamma _{n}=\phi ^{-1}\varepsilon _{n}$\textit{\
converges to }$\gamma =\phi ^{-1}\varepsilon _{0}.$

Proof. First note that for any number $\xi >0$ for any $\psi \in S$ there
exists some bounded set $\Lambda \in R^{d}$ such that 
\begin{equation*}
|\int_{R^{d}\smallsetminus \Lambda }\varepsilon _{n}(t)\phi ^{-1}(t)\psi
(t)dt|<\xi .
\end{equation*}%
Indeed, $|\int_{R^{d}\smallsetminus \Lambda }\varepsilon _{n}(t)\phi
^{-1}(t)\psi (t)dt|\leq $ 
\begin{eqnarray*}
&&V\int_{R^{d}\smallsetminus \Lambda }\left( \left( 1+t^{2}\right)
^{-1}\right) ^{-(\bar{m}+m)}|\phi ^{-1}(t)||\left( \left( 1+t^{2}\right)
^{1}\right) ^{m}\psi (t)|dt \\
&\leq &V\underset{R^{d}\smallsetminus \Lambda }{\sup }|\left( \left(
1+t^{2}\right) ^{1}\right) ^{m}\psi (t)|\int_{R^{d}}\left( \left(
1+t^{2}\right) ^{-1}\right) ^{-m}|\phi ^{-1}(t)|dt.
\end{eqnarray*}%
Since by (\ref{conds}) $\int_{R^{d}}\left( \left( 1+t^{2}\right)
^{-1}\right) ^{-m}|\phi ^{-1}(t)|dt$ is bounded, say, by some $V_{\phi }$
and for $\psi \in S$ as $t\rightarrow \infty $ the value $|\left( \left(
1+t^{2}\right) ^{1}\right) ^{m}\psi (t)|$ converges to zero, the set $%
\Lambda $ can be selected such that 
\begin{equation*}
\underset{R^{d}\smallsetminus \Lambda }{\sup }|\left( \left( 1+t^{2}\right)
^{-1}\right) ^{m}\psi (t)|<\xi V_{\phi }^{-1}V^{-1}.
\end{equation*}%
Consider now the value of the functional $\left( \varepsilon _{n}\phi
^{-1}-\varepsilon \phi ^{-1},\psi \right) .$ Since the sequence of
continuous bounded functions $\varepsilon _{n}-\varepsilon $ converges to
zero in $S^{\ast }$ it converges to zero point-wise and uniformly on bounded 
$\Lambda $. Then for any $\zeta $ and $\Lambda $ corresponding to $\xi =%
\frac{1}{2}\zeta $ we can find $N$ such that 
\begin{equation*}
\underset{\Lambda }{\sup }\left\vert \varepsilon _{n}-\varepsilon
_{0}\right\vert \left\vert \int \phi ^{-1}\left( t\right) \psi
(t)dt\right\vert <\xi
\end{equation*}
for any $n>N.$ It follows that $\left\vert \left( \phi ^{-1}\varepsilon
_{n}-\phi ^{-1}\varepsilon ,\psi \right) \right\vert <\zeta .$

Thus $\left\vert \gamma _{n}-\gamma \right\vert \rightarrow 0$ in $S^{\ast
}.\blacksquare $

In the measurement error problem the function $\varepsilon $ is a
characteristic function; if all $\varepsilon _{n}$ in the equation $\left( %
\ref{eq1}\right) $ are characteristic functions or at least some continuous
and uniformly bounded functions then they satisfy the conditions of Lemma 2
with $\bar{m}=0$.

The somewhat unusual requirement that the sequence that converges be bounded
in $S^{\ast \text{ }}$is a requirement that is associated with the weak
topology of the space of generalized functions. In normed spaces convergence
implies eventual boundedness in norm of the converging sequence, but this is
not the case for generalized functions that do not exclude singularities and
polynomial growth. The boundedness requirement uniformly limits the degree
of singularity and the rate of divergence at infinity of the generalized
functions in the bounded set. It is however less restrictive than many
assumptions in normed spaces such as boundedness or integrability.

The Lemma demonstrates that unlike the behavior in function spaces where the
deconvolution is usually ill-posed, in the weaker topology of the space $%
S^{\ast }$ deconvolution is well-posed when the error distribution is such
that the condition (\ref{conds}) is satisfied. Usually well-posedness in
deconvolution is examined in terms of ordinary smooth and supersmooth
distributions. When the distribution is ordinary smooth the condition (\ref%
{conds}) holds; it is not satisfied for supersmooth distributions.

An and Hu (20111) demonstrate that well-posedness holds automatically in a
measurement error problem provided there is a mass at zero; this happens if
there is a reporting error but with some non-zero probability some values
are truthfully reported. Indeed if there is a mass at zero the error
distribution is a mixture with the delta-function that has Fourier transform
equal to 1, then $\phi $ is separated from zero and condition (\ref{conds})
for $\phi ^{-1}$ holds.

A further extension is possible to deconvolution where the generalized
function $\gamma $ is not necessarily a continuous function and can be
singular. Then additional differentiability conditions are needed to ensure
that $\gamma $ and $\phi $ are in a product pair. Let $\Gamma =S_{l,m}^{\ast
}(V)$ with $l=\left( l_{1},...,l_{d}\text{ }\right) $; $\phi $ be a bounded
continuous function that has continuous derivatives of any order $\leq
l_{1}+...+l_{d};$ then a product $\varepsilon =\gamma \phi $ exists in $%
S^{\ast }$ for any $\gamma \in \Gamma .$

\begin{theorem}
\textbf{\ }Assume $\gamma ,\gamma _{n}\in S_{l,m}^{\ast }(V),$ and $\phi $
is a continuous regular function that has continuous derivatives $\partial
^{l_{i}}$ for all subvectors $l_{i}$ of $l;$ $\varepsilon =\gamma \phi ,$ $%
\varepsilon _{n}=\gamma _{n}\phi $ for $n=1,2,...;$ and $\varepsilon
_{n}-\varepsilon \rightarrow 0$ in $S^{\ast }.$ If $\phi ^{-1}$ and all $%
\partial ^{l_{i}}\left( \phi ^{-1}\right) $ for any subvector $l_{i}$ of
each $l$ satisfy (\ref{conds}) then $\varepsilon _{n}\phi ^{-1}$ exists in $%
S^{\ast }$ and $Ft^{-1}(\varepsilon _{n}\phi ^{-1})\rightarrow g$ in $%
S^{\ast }.$
\end{theorem}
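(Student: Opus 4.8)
The plan is to mimic the proof of Lemma 2, but with the continuous representative of $\gamma$ replaced by an $|l|$-th order primitive so that all derivatives are transferred onto the test function. I would write $\gamma_{n}=\partial^{l}c_{n}$ and $\gamma=\partial^{l}c$ with $c_{n}=(1+x^{2})^{m/2}\tilde{c}_{n}$, $c=(1+x^{2})^{m/2}\tilde{c}$, $|\tilde{c}_{n}|,|\tilde{c}|<V$, and set $\theta_{n}=\gamma_{n}-\gamma=\partial^{l}d_{n}$ with $d_{n}=c_{n}-c$. Membership in $S_{l,m}^{\ast}(V)$ then delivers the uniform polynomial bound $|d_{n}(t)|\leq 2V(1+t^{2})^{m/2}$, which is exactly what will make the tail estimates independent of $n$. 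Note that $\theta_{n}\phi=\varepsilon_{n}-\varepsilon\rightarrow 0$ in $S^{\ast}$ by hypothesis.

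\emph{Existence and identification of the quotient.} Using the continuous derivatives $\partial^{l_{i}}\phi$ for the subvectors $l_{i}\leq l$, the Leibniz rule represents $\varepsilon_{n}=\gamma_{n}\phi$ as $\partial^{l}C_{n}$ for a continuous, polynomially bounded $C_{n}$. For $\psi\in S$ I would define $(\varepsilon_{n}\phi^{-1},\psi):=(\varepsilon_{n},\phi^{-1}\psi)$ and expand by Leibniz into a finite sum of integrals of the type $\int C_{n}\,\partial^{l_{i}}(\phi^{-1})\,\partial^{l-l_{i}}\psi\,dt$. Each such integral is finite because $C_{n}$ grows polynomially, each $\partial^{l_{i}}(\phi^{-1})$ satisfies (\ref{conds}), and $\partial^{l-l_{i}}\psi$ decays faster than any power; the sum depends continuously on $\psi$, so $\varepsilon_{n}\phi^{-1}\in S^{\ast}$. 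The cancellation $\phi\phi^{-1}=1$ collapses the Leibniz sum and yields the identification $\varepsilon_{n}\phi^{-1}=\gamma_{n}$, most transparently through $(\varepsilon_{n}\phi^{-1},\psi)=(\gamma_{n}\phi,\phi^{-1}\psi)=(\gamma_{n},\psi)$.

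\emph{Convergence.} It remains to prove $\theta_{n}\rightarrow 0$ in $S^{\ast}$, i.e.\ $(\theta_{n},\psi)=(-1)^{|l|}\int d_{n}\,\partial^{l}\psi\,dt\rightarrow 0$ for every $\psi\in S$. Since $\phi^{-1}\psi$ is not rapidly decreasing I would localise: fix a smooth cut-off $\beta_{\Lambda}$ equal to $1$ on a bounded set $\Lambda$, with derivatives bounded uniformly in $\Lambda$, and split $\psi=\psi\beta_{\Lambda}+\psi(1-\beta_{\Lambda})$. For the tail the uniform bound on $d_{n}$ gives $|(\theta_{n},\psi(1-\beta_{\Lambda}))|\leq 2V\int_{R^{d}\setminus\Lambda}(1+t^{2})^{m/2}|\partial^{l}(\psi(1-\beta_{\Lambda}))|\,dt$, which can be made smaller than any prescribed $\xi$ once $\Lambda$ is large, uniformly in $n$, because $\partial^{l}\psi$ overwhelms the polynomial weight. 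For the main part $\psi\beta_{\Lambda}=\phi\cdot\chi^{(\Lambda)}$ with $\chi^{(\Lambda)}:=\phi^{-1}\psi\beta_{\Lambda}$ compactly supported and $|l|$ times continuously differentiable, so that $(\theta_{n},\psi\beta_{\Lambda})=(\theta_{n}\phi,\chi^{(\Lambda)})=(\varepsilon_{n}-\varepsilon,\chi^{(\Lambda)})$; since $\varepsilon_{n}-\varepsilon\rightarrow 0$ in $S^{\ast}$ and $\chi^{(\Lambda)}$ can be approximated in $C^{|l|}$ by elements of $D$ (the representatives being uniformly bounded on the fixed compact support), this tends to $0$ for the fixed $\Lambda$. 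Choosing $\Lambda$ for $\xi=\tfrac{1}{2}\zeta$ and then $N$ large yields $|(\theta_{n},\psi)|<\zeta$ for $n>N$, hence $\gamma_{n}\rightarrow\gamma$ in $S^{\ast}$. Finally, since $\varepsilon_{n}\phi^{-1}=\gamma_{n}$ and $Ft^{-1}$ is continuous on $S^{\ast}$, $Ft^{-1}(\varepsilon_{n}\phi^{-1})=Ft^{-1}(\gamma_{n})\rightarrow Ft^{-1}(\gamma)=g$, the last equality by Theorem 2.

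The hard part is precisely that $\phi^{-1}\psi\notin S$, so the quotient cannot be evaluated by feeding $\phi^{-1}\psi$ to $\varepsilon_{n}$ directly; the localisation by $\beta_{\Lambda}$ together with the uniform $S^{\ast}$-bound on $d_{n}$ (which produces an $n$-independent tail) is the crux. One must also check that testing the convergent sequence $\varepsilon_{n}-\varepsilon$ against the merely $|l|$-times differentiable function $\chi^{(\Lambda)}$ is legitimate, which is where the $C^{|l|}$-density of $D$ and the local boundedness of the representatives enter, and verify the Leibniz-collapse giving $\varepsilon_{n}\phi^{-1}=\gamma_{n}$ in the singular setting.
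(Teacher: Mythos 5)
Your proposal is correct, but its convergence step follows a genuinely different route from the paper's. The paper writes $\varepsilon _{n}-\varepsilon =\partial ^{l}(c_{n}-c)$ with $c_{n},c$ continuous representatives of the $\varepsilon $'s, moves all $l$ derivatives onto $\phi ^{-1}\psi $, Leibniz-expands into the terms $\left( c_{n}-c,\partial ^{l_{1}}(\phi ^{-1})\partial ^{l_{2}}\psi \right) $ and disposes of each by Lemma 2, so that condition (\ref{conds}) on every $\partial ^{l_{i}}\left( \phi ^{-1}\right) $ is what controls the tails throughout. You instead use the collapse $\varepsilon _{n}\phi ^{-1}=\gamma _{n}$ to reduce the whole problem to $\gamma _{n}\rightarrow \gamma $, take representatives of the $\gamma $'s (whose uniform polynomial bound is exactly the $S_{l,m}^{\ast }(V)$ hypothesis), and control the tail by that bound and the decay of $\partial ^{l}\psi $ alone; the conditions on $\partial ^{l_{i}}\left( \phi ^{-1}\right) $ are then needed only for existence of the quotient, and $\phi ^{-1}$ enters the convergence argument only locally through $\chi ^{(\Lambda )}=\phi ^{-1}\psi \beta _{\Lambda }$. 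What this buys is that you never need the representatives of $\varepsilon _{n}-\varepsilon $ themselves to be uniformly bounded or convergent, which is what the paper implicitly relies on when it invokes Lemma 2 term by term; what it costs is the auxiliary fact that a sequence converging weakly in $S^{\ast }$ with uniformly bounded continuous $\partial ^{l}$-representatives can be tested against compactly supported $C^{|l|}$ functions --- you flag this correctly, and the density-plus-equicontinuity argument you sketch does close it. Both routes finish identically with continuity of $Ft^{-1}$ on $S^{\ast }$.
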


\begin{proof}
Denote by $c_{n}$ and $c$ the continuous functions for which $\varepsilon
_{n}$ and $\varepsilon $ can be defined via the operator $\partial ^{l}.$
Consider the functional $\left( \phi ^{-1}\varepsilon _{n}-\phi
^{-1}\varepsilon ,\psi \right) =\left( \phi ^{-1}\partial ^{l}\left(
c_{n}-c\right) ,\psi \right) ;$ it can be extended to the functional 
\begin{equation*}
\left( c_{n}-c,\left( -1\right) ^{\left\vert l\right\vert }\partial
^{l}\left( \phi ^{-1}\psi \right) \right) =\left( -1\right) ^{\left\vert
l\right\vert }\tsum\limits_{(l_{1};l_{2})=l}\kappa \left( l_{1},l_{2}\right)
\left( c_{n}-c,\partial ^{l_{1}}(\phi ^{-1})\partial ^{l_{2}}\psi \right) ,
\end{equation*}%
where $\kappa \left( l_{1},l_{2}\right) $ is the corresponding integer
coefficient arising from differentiation of the product. All the functionals 
$\left( c_{n}-c,\partial ^{l_{1}}(\phi ^{-1})\partial ^{l_{2}}\psi \right) $
that enter into the linear combinations on the right-hand side satisfy the
conditions of Lemma 2 and thus converge to zero. Therefore, $\gamma
_{n}-\gamma =(\varepsilon _{n}-\varepsilon )\phi ^{-1}\rightarrow 0$ in $%
S^{\ast }.$

By continuity in $S^{\ast }$ of\ the inverse Fourier transform the limit 
\begin{equation*}
Ft^{-1}(\varepsilon _{n}\phi ^{-1})\rightarrow g
\end{equation*}%
in $S^{\ast }$ follows.
\end{proof}

\bigskip If the measurement error model holds conditionally on some $d_{c}$%
-dimensional $x_{c}$ assume that the all the distribution functions are
defined on $R^{d}\times R^{d_{c}}$ as $F(\cdot ,x_{c}).$ For any generalized
function $b$ on $R^{d_{x}}\times R^{d_{c}}$ the partial Fourier transform $%
Ft_{|x_{c}}$ is defined as follows: for $\psi (x,x_{c})\in S$ with $x\in
R^{d_{x}},x_{c}\in R^{d_{c}}$ define $Ft_{|x_{c}}(\psi )\left(
s,x_{c}\right) \in S$ by $\int e^{ix^{T}s}\psi (x,x_{c})dx,$ then $\left(
Ft_{|x_{c}}b,\psi \right) =\left( b,Ft_{|x_{c}}(\psi )\right) .$ Consider
the following possibilities: 1. the generalized functions $\gamma ,\phi
,\varepsilon $ denote now the partial Fourier transforms or the distribution
functions; 2. instead of distribution functions consider generalized density
functions and denote by $\gamma ,\phi ,\varepsilon $ the corresponding
partial Fourier transforms; 3. assume continuous differentiability of the
probability distribution functions with respect to $x_{c}$ and consider
conditional probability or, correspondingly, conditional generalized
densities; additional conditions will be needed to make sure that dividing
by the density of $x_{c}$ is possible in $S^{\ast }.$ In all these cases $%
\left( \ref{eq1}\right) $ holds. To avoid imposing extra constraints here
examine case 2. For deconvolution $\phi $ is assumed known and often $\phi $
does not vary with \ $x_{c}.$ The conditions of Lemma 2 may not apply as the
functions $\varepsilon ,\varepsilon _{n}$ may no longer be continuous,
however, since a probability distribution is a bounded monotone function, $%
\varepsilon $ is in some set $S_{1,d_{c}}^{\ast }(V)$ and thus theorem 4
requires only that all $\varepsilon _{n}$ belong to $S_{1,d_{c}}^{\ast }(V)$%
, that (\ref{conds}) is satisfied for $\phi ^{-1}$ and that $\phi $ be
differentiable in components of $x.$

The condition (\ref{conds}) for $\phi ^{-1}$ is necessary for
well-posedness: the example below shows that even in the weak topology of
generalized functions well-posedness does not hold if it is not satisfied
e.g. for deconvolution with supersmooth distributions.

\textbf{Example. }\textit{Consider the function }$\phi (x)=e^{-x^{2}},$%
\textit{\ }$x\in R,$\textit{\ then }$\phi ^{-1}$\textit{\ does not satisfy (%
\ref{conds}). Then there exists a sequence of functions }$\gamma _{n}$%
\textit{\ and a }$\gamma $\textit{\ in }$S^{\ast }$\textit{\ such that }$%
\gamma _{n}\phi \rightarrow \gamma \phi $\textit{\ in }$S^{\ast },$\textit{\
but }$\gamma _{n}$\textit{\ does not converge to }$\gamma $\textit{\ in }$%
S^{\ast }.$

\textit{Define the function }$b_{n}(x)=\phi ^{-1}(x)I\left( n-\frac{1}{n},n+%
\frac{1}{n}\right) ;$\textit{\ then }$b_{n}\in S^{\ast }.$\textit{\ Then
define }$\gamma _{n}=\gamma +b_{n}$\textit{\ for some fixed }$\gamma \in
S^{\ast }.$\textit{\ Define }$\varepsilon _{n}=\gamma _{n}\phi $\textit{\
and }$\varepsilon =\gamma \phi ;$\textit{\ since }$\phi \in S,$\textit{\ the
products are always defined. Consider the difference }$\varepsilon
_{n}-\varepsilon ;$\textit{\ it equals }$I\left( n-\frac{1}{n},n+\frac{1}{n}%
\right) .$\textit{\ In }$S^{\ast }$\textit{\ this sequence converges to
zero. On the other hand, }$\left( \gamma _{n}-\gamma ,\psi \right) $\textit{%
\ for any }$\psi \in S$\textit{\ equals }%
\begin{equation*}
\int_{n-2/n}^{n+2/n}e^{x^{2}}\psi (x)dx.
\end{equation*}%
\textit{Select }$\psi \in S$\textit{\ given by }$\psi (x)=\exp (-\left\vert
x\right\vert )$\textit{; then }$\left( \gamma _{n}-\gamma ,\psi \right) =$%
\begin{eqnarray*}
\int_{n-2/n}^{n+2/n}e^{x^{2}}\psi (x)dx &\geq
&\int_{n-1/n}^{n+1/n}e^{x^{2}-x}dx \\
&\geq &\frac{1}{2n}e^{-(n+\frac{1}{n})+\left( n-\frac{1}{n}\right) ^{2}}.
\end{eqnarray*}%
\textit{Thus for this }$\psi $ \textit{the values of the functional, }$%
\left( \gamma _{n}-\gamma ,\psi \right) $\textit{\ diverge as }$n\rightarrow
\infty ,$\textit{\ and so }$\gamma _{n}-\gamma $\textit{\ does not converge
to zero in }$S^{\ast }.\blacksquare $

Note that since any $\psi \in D$ has bounded support in the example
convergence of $\gamma _{n}$ to $\gamma $ holds in $D^{\ast }.$

\subsection{Well-posedness in $S^{\ast }$ of the solution to the system of
equations (\protect\ref{eq2})}

Recall (\ref{conds}) and define a subclass of functions in $S^{\ast },$ $%
\Phi (m,V),$ where $b\in \Phi (m,V)$ if $b$ satisfies the condition

\begin{equation}
((1+t^{2})^{-1})^{m}\left\vert b(t)\right\vert <V<\infty \text{ }
\label{uniform}
\end{equation}

The requirement that $b$ belong to $\Phi (m,V)$ implies that (\ref{conds})
applies uniformly in this class and also defines a bounded set in $S^{\ast
}. $ The next Theorem considers well-posedness of the problem in $\left( \ref%
{newey2}\right) $ (and correspondingly, $\left( \ref{eq2}\right) )$ under
the identification conditions of Theorem 3(b). The conditions of Theorem
3(a) can be similarly considered.

\begin{theorem}
\textbf{\ }Suppose that $(\gamma _{n},\phi _{n}),n=1,2,...$ and $(\gamma
,\phi )$ belong to a product pair $\left( \Gamma ,\Phi \right) .$
Additionally, let the conditions of Theorem 3(b) apply to each pair $(\gamma
_{n},\phi _{n}).$ Suppose that $\varepsilon _{1n}-\varepsilon
_{1}\rightarrow 0$ and $\varepsilon _{2kn}-\varepsilon _{2k}\rightarrow 0$
in $S^{\ast };$ the functions $\phi ,\phi _{n}$ as well as $\phi ^{-1},\phi
_{n}^{-1}$ restricted to $W$ all belong to some $\Phi (m,V)$ Then if

(a) $\left( \Gamma ,\Phi \right) \equiv \left( S^{\ast },O_{M}\right) ,$

or

(b) all $\varkappa _{kn}=\left( \phi _{n}\right) _{k}^{\prime }\phi
_{n}^{-1},$ and $\varkappa _{k}=\left( \phi \right) _{k}^{\prime }\phi ^{-1}$
are such that $\left( \varkappa _{kn},\varepsilon _{1n}\right) ,$ $\left(
\varkappa _{k},\varepsilon _{1}\right) $ belong to some product pair,

the products $\varepsilon _{n}\phi _{n}^{-1}$ exist in $S^{\ast }$ and $%
Ft^{-1}(\varepsilon _{1n}\phi _{n}^{-1})\rightarrow g$ in $S^{\ast }.$
\end{theorem}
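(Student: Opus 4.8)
The plan is to run the reconstruction of Theorem 3(b) for every index $n$ and then show that this reconstruction is continuous in the data $\varepsilon_{1n},\varepsilon_{2kn}$, so that their convergence forces $\gamma_n\to\gamma$ in $S^{\ast}$; continuity of $Ft^{-1}$ then yields $Ft^{-1}(\varepsilon_{1n}\phi_n^{-1})\to g$. Since the hypotheses of Theorem 3(b) apply to each $(\gamma_n,\phi_n)$, differentiating the first equation of (\ref{eq2}) and substituting the second gives the continuous logarithmic-derivative functions $\varkappa_{kn}=(\phi_n)'_k\phi_n^{-1}$ as the unique solutions on $W$ of $\varepsilon_{1n}\varkappa_{kn}=(\varepsilon_{1n})'_k-i\varepsilon_{2kn}$, the reconstruction $\phi_n(s)=\exp\int_0^s\sum_k\varkappa_{kn}(t)\,dt_k$, and $\gamma_n=\phi_n^{-1}\varepsilon_{1n}$; the same formulae hold in the limit, and the product-pair membership (case (a): $(S^{\ast},O_M)$; case (b): the assumed pairs) guarantees that all the products written here exist in $S^{\ast}$.

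First I would decompose
\[
\gamma_n-\gamma=\phi_n^{-1}(\varepsilon_{1n}-\varepsilon_1)+(\phi_n^{-1}-\phi^{-1})\varepsilon_1 .
\]
The first term is dispatched exactly as in Lemma 2 and Theorem 4: because every $\phi_n^{-1}$ lies in the one bounded class $\Phi(m,V)$, the tail estimate of Lemma 2 is uniform in $n$ (the constants $m,V$ being fixed), and combined with pointwise convergence of $\varepsilon_{1n}-\varepsilon_1$ on bounded sets it forces this term to zero; in case (a) it is immediate from hypocontinuity of the classical pair $(S^{\ast},O_M)$. The second term has a fixed factor $\varepsilon_1$ multiplying a sequence, so it remains only to prove $\phi_n^{-1}\to\phi^{-1}$, which reduces to the convergence of the reconstruction.

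The heart of the argument is therefore $\varkappa_{kn}\to\varkappa_k$. Setting $N_{kn}=(\varepsilon_{1n})'_k-i\varepsilon_{2kn}$, continuity of differentiation in $S^{\ast}$ gives $N_{kn}\to N_k$, and subtracting the defining equations yields
\[
(\varkappa_{kn}-\varkappa_k)\varepsilon_1=(N_{kn}-N_k)-\varkappa_{kn}(\varepsilon_{1n}-\varepsilon_1).
\]
Here conditions (a)/(b) are used: in case (a) the factor $\varkappa_{kn}\in O_M$ multiplies $\varepsilon_{1n}-\varepsilon_1\in S^{\ast}$ hypocontinuously, while in case (b) the assumed product pair for $(\varkappa_{kn},\varepsilon_{1n})$ supplies the same property, so, using that the $\varkappa_{kn}$ stay in a bounded set through the uniform $\Phi(m,V)$ control on $\phi_n,\phi_n^{-1}$, the right-hand side tends to zero in $S^{\ast}$. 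Dividing by the fixed non-vanishing $\varepsilon_1$ on $W$ through the continuity-of-division argument of Lemma 2 then gives $\varkappa_{kn}\to\varkappa_k$, and continuity of the map $\varkappa\mapsto\exp\int_0^s\sum_k\varkappa\,dt_k$ on continuous functions propagates this to $\phi_n\to\phi$, whence $\phi_n^{-1}\to\phi^{-1}$ by the bounds (\ref{uniform}).

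The step I expect to be the main obstacle is precisely this continuity of division, since division in $S^{\ast}$ is not continuous in general: one must simultaneously guarantee that the $\varkappa_{kn}$ remain in a bounded set (so that $\varkappa_{kn}(\varepsilon_{1n}-\varepsilon_1)\to0$ by hypocontinuity) and that dividing the resulting vanishing product by the fixed $\varepsilon_1$ preserves convergence. The uniform membership in $\Phi(m,V)$ together with the product-pair hypotheses (a)/(b) are exactly the structure that secures both, in the same way Lemma 2 turned a fixed-divisor division into a continuous operation. With $\gamma_n\to\gamma$ in hand, continuity of $Ft^{-1}$ on $S^{\ast}$ gives $Ft^{-1}(\varepsilon_{1n}\phi_n^{-1})=Ft^{-1}(\gamma_n)\to Ft^{-1}(\gamma)=g$.
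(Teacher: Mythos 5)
Your proposal follows essentially the same route as the paper's proof: reduce everything to showing $\varkappa_{kn}\to\varkappa_k$ from the defining equations $\varepsilon_{1n}\varkappa_{kn}=(\varepsilon_{1n})'_k-i\varepsilon_{2kn}$, using the uniform $\Phi(m,V)$ bounds to keep the $\varkappa_{kn}$ in a bounded set so that hypocontinuity kills $\varkappa_{kn}(\varepsilon_{1n}-\varepsilon_1)$, then recover $\phi_n\to\phi$ and $\phi_n^{-1}\to\phi^{-1}$, and conclude via an add-and-subtract decomposition of $\varepsilon_{1n}\phi_n^{-1}-\varepsilon_1\phi^{-1}$ and continuity of $Ft^{-1}$. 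The only cosmetic differences are which factor carries the index $n$ in that decomposition and your appeal to Lemma 2 (rather than the paper's direct continuity-of-the-functional $\varepsilon_1$ argument) for the final division step.
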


\begin{proof}
(a) We have that $\phi ,\phi _{n}\in \mathcal{O}_{M}.$ It follows that $%
\left( \phi \right) _{k}^{\prime },\left( \phi _{n}\right) _{k}^{\prime }\in 
\mathcal{O}_{M}.$ Also $\left( \phi \right) _{k}^{\prime },\left( \phi
_{n}\right) _{k}^{\prime }\in \Phi (m^{\prime },V),$ where $m^{\prime
}=m+\iota ,$ with $\iota $ a vector of ones. From Theorem 3 it follows that
for every $n$ the functions $\gamma _{n}$ and $\phi _{n}$ are uniquely
identified on $W.$ From now on we consider all functions and function spaces
restricted to $W,$ even when $W$ does not coincide with $R^{d},$ but keep
the same notation. The functions belong also to $\Phi (\tilde{m},V)$ where $%
\tilde{m}=m+m^{\prime }.$ Without loss of generality assume that each $%
\varkappa _{k}$ is also in the same $\Phi (\tilde{m},V),$ and so all $%
\varkappa _{kn},\varkappa _{k}$ are in a bounded set in $S^{\ast }$. Since
from condition $\left( a\right) $ it follows that $\varkappa _{kn}=\left(
\phi _{n}\right) _{k}^{\prime }\phi _{n}^{-1}\in \mathcal{O}_{M},$ products
are defined and from equations $\varepsilon _{1n}\varkappa _{kn}-(\left(
\varepsilon _{1n}\right) _{k}^{\prime }-i\varepsilon _{2kn})=0$ and
convergence of $\varepsilon _{in}$ to $\varepsilon _{i}$ we get that $%
\varepsilon _{1n}\varkappa _{kn}-\varepsilon _{1}\varkappa _{k}$ converges
to zero in $S^{\ast }.$ For functions in $\mathcal{O}_{M}$ products with any
elements from $S^{\ast }$ exist, thus $\varepsilon _{1n}\varkappa
_{kn}-\varepsilon _{1}\varkappa _{kn}$ exists; moreover $\left( \varepsilon
_{1n}-\varepsilon _{1}\right) \varkappa _{kn}$ converges to zero in $S^{\ast
}$ by the hypocontinuity property (Schwartz, p.246). It follows that $%
\varepsilon _{1}(\varkappa _{kn}-\varkappa _{k})$ converges to zero in $%
S^{\ast }.$ Since $\varepsilon _{1}$ is supported on $W$ and $(\varkappa
_{kn}-\varkappa _{k})\in \mathcal{O}_{M}$ by continuity of the functional $%
\varepsilon _{1}$ it follows that $\varkappa _{kn}-\varkappa _{k}$ converges
to zero on $W.$ It then follows that $\phi _{n}-\phi \rightarrow 0$ in $%
S^{\ast }$ as well as pointwise and uniformly on bounded sets in $W$, the
product $\phi ^{-1}\phi _{n}^{-1}$ is in a bounded set in $S^{\ast },$ thus $%
\phi _{n}^{-1}-\phi ^{-1}=\phi ^{-1}\phi _{n}^{-1}\left( \phi -\phi
_{n}\right) $ converges to zero in $S^{\ast }.$

Consider $\varepsilon _{1n}\phi _{n}^{-1}-\varepsilon _{1}\phi
^{-1}=\varepsilon _{1n}(\phi _{n}^{-1}-\phi ^{-1})+(\varepsilon
_{1n}-\varepsilon _{1})\phi ^{-1};$ this difference converges to zero in $%
S^{\ast },$ thus $\gamma _{n}$ converges to $\gamma $ in $S^{\ast }$ and
since the Fourier transform is continuously invertible in $S^{\ast },$ $%
g_{n}=Ft^{-1}(\varepsilon _{1n}\phi _{n}^{-1})\rightarrow g$ in $S^{\ast }.$

(b) Modify the proof of $\left( a\right) $ by stating existence of products
based on the assumption in $\left( b\right) $ rather than assuming the
specific product pair $\left( S^{\ast },O_{M}\right) .$
\end{proof}

Thus well-posedness obtains for\ the solution to the errors in variables
regression with Berkson type instruments that provides (\ref{newey2}) as
long as $\phi ^{-1}$ and all the $\phi _{n}^{-1}$ are all in the same class $%
\Phi (m,V));$ e.g. a set of the characteristic functions would satisfy this
if they are not supersmooth and the growth of all $\phi _{n}^{-1}$ is
bounded by the same order polynomial. With the condition (a) which does not
restrict $\gamma \in S^{\ast }$ the Theorem will hold if the supports of $%
\phi ,\phi _{n}$ are bounded or if these were Fourier transforms of
functions with some singularity points (e.g. from distributions with mass
points), when $\phi $ would include a constant. The condition (b) would
require imposing restrictions on degree of singularity of $\gamma $ and
further differentiability conditions on $\phi $ and $\phi _{n}$ of the type
imposed in Theorem 4 to ensure that all the products exist. Even if in the
model $\phi $ is a characteristic function, $\phi _{n}$ need not necessarily
be characteristic functions, but just continuous functions that satisfy the
conditions. This result did not require any restrictions on $\gamma $ and
thus on the regression function in the model beyond belonging to $S^{\ast }.$
Under Theorem 3(a) a similar result holds after imposing the conditions on $%
\gamma ,\gamma _{n}$ rather than $\phi ,\phi _{n}$ to ensure that
generalized functions $\varkappa _{kn}=\left( \gamma _{n}\right)
_{k}^{\prime }\gamma _{n}^{-1}$ belong to $\mathcal{O}_{M}$ and to some $%
\Phi (\tilde{m},V).$

For example, if as in Cunha et al (2010) both $\gamma $ and $\phi $ are
characteristic functions and thus continuous and bounded, products exist and
Theorem 5(b) applies, as long as the conditions of Theorem 3 are satisfied
and the condition $\left( \ref{uniform}\right) $ is satisfied for the
inverses of the functions in the sequence, that is e.g. under Theorem 3(a)
for $\gamma _{n}^{-1},$ or under 3(b) for $\phi _{n}^{-1},$ so that the
corresponding distributions are not supersmooth and the polynomial lower
bound holds uniformly, well-posedness holds.

The implications that well-posedness has for estimation are two-fold. One is
that unless well-posedness holds, that is the inverse mapping from the class
of the known functions into the class of identified solutions is continuous,
the solutions corresponding to the consistently estimated known functions
will not in general provide consistent estimators for the solutions. The
other is that in a well-posed problem consistent estimation of the known
functions automatically gives rise to consistency of plug-in estimators of
solutions; the next section examines consistent estimation of the
deconvolution and the solution to the system (\ref{newey2}) in the space $%
S^{\ast }.$

\section{Random generalized functions and stochastic convergence}

\bigskip This section examines stochastic convergence of the solutions to
the deconvolution equation (\ref{conv}) and to (\ref{newey2}), equivalently
to $\left( \ref{eq1}\right) $ and $\left( \ref{eq2}\right) .$ If \ some
consistent estimators are available for either the function $w$ ($w_{1}$ and 
$w_{2k})$ or, equivalently, for the Fourier transform, $\varepsilon $ ($%
\varepsilon \,_{1}$ and $\varepsilon _{2k}),$ stochastic convergence of the
solutions provides consistency results for plug-in estimators of $g$ as long
as well-posedness holds. Since generalized functions in any space $G^{\ast }$
are represented as linear continuous functionals on $G$ results for random
functionals are applicable here, the specific feature is that for any
generalized function $b\in G^{\ast }$ one needs to consider the collection
of random functionals indexed by the functions from the space $G.$

\subsection{Random generalized functions}

Following Gel'fand and Vilenkin (1964) define random generalized functions
as random linear continuous functionals on the space of test functions (see
e.g. Koralov and Sinai, 2007 who consider specifically $S^{\ast }-$ ch.17).
In particular, any random generalized function $\tilde{b}$ on $G$ is
represented by a collection of (complex-valued) random variables on a common
probability space that are indexed by $\psi \in G,$ denoted $(\tilde{b},\psi
),$ such that

(a) $(\tilde{b},(a_{1}\psi _{1}+a_{2}\psi _{2}))=a_{1}(\tilde{b},\psi
_{1})+a_{2}(\tilde{b},\psi _{2})$ a.s.;

(b) if $\psi _{kn}\rightarrow \psi _{k}$ in $S$ as $n\rightarrow \infty
,k=1,2..,m,$ then vectors 
\begin{equation*}
((\tilde{b},\psi _{1n})...(\tilde{b},\psi _{mn}))\rightarrow _{d}((\tilde{b}%
,\psi _{1})...(\tilde{b},\phi _{m}))
\end{equation*}%
where $\rightarrow _{d}$ denotes convergence in distribution.

As shown in Gel'fand and Vilenkin, equivalently, there exists a probability
measure on $G^{\ast }$ such that for any set $\psi _{1},...\psi _{m}\in G$
the random vectors (($b,\psi _{1}),...(b,\psi _{m}))$ have the same
distribution as for some random functional $\tilde{b},$ $((\tilde{b},\psi
_{1})...(\tilde{b},\psi _{m})).$ An example is a generalized Gaussian
process $b,$ so defined if for any $\psi _{1},...,\psi _{m}$ the joint
distribution of $\left( b,\psi _{1}\right) ,...,\left( b,\psi _{m}\right) $
is Gaussian. A generalized Gaussian process is uniquely determined by its
mean functional, $\mu _{b}:$ \ $(\mu _{b},\psi )=E(b,\psi ),$ and the
covariance bilinear functional, $B_{b}(\psi _{i},\psi _{j})=E(\left( b,\psi
_{i}\right) \left( b,\psi _{j}\right) ).$

Gelfand, Vilenkin (v.4, p. 260) give the covariance functional of the
generalized derivative, $W^{\prime },$ of the Wiener process as 
\begin{equation*}
B_{W^{\prime }}(\psi _{1},\psi _{2})=\int_{0}^{\infty }\psi _{1}(t)\overline{%
\psi _{2}(t)}dt
\end{equation*}%
where the overbar represents complex conjugation; for real-valued processes
it is not needed. The Fourier transform of a Gaussian random process $b$ is
also a Gaussian random process with covariance functional 
\begin{equation*}
B_{Ft(b)}(\psi _{i},\psi _{j})=E(\left( b,Ft(\psi _{i})\right) \left(
b,Ft(\psi _{j})\right) ).
\end{equation*}%
So for $Ft(W^{\prime })$ the covariance functional is 
\begin{equation*}
B_{Ft(W^{\prime })}(\psi _{1},\psi _{2})=\int_{0}^{\infty }Ft(\psi
_{1})(\zeta )\overline{Ft(\psi _{2})(\zeta )}d\zeta .
\end{equation*}%
The mean functional is zero for $W^{\prime }$ and $Ft(W^{\prime }).$

Gelfand and Vilenkin (1964) provide definitions and results for generalized
random functions in $G^{\ast }=D^{\ast },$ rather than $S^{\ast }.$ One can
similarly define random generalized functions on other spaces of test
functions, not necessarily infinitely differentiable, e.g. on $D_{k}$ of $k$
times continuously differentiable functions with compact support, leading to
space $D_{k}^{\ast }$ of random linear continuous functionals on $D_{k}.$

\subsection{Stochastic convergence of random generalized functions}

A random sequence $b_{n}$ of elements of a space of generalized functions $%
G^{\ast }$ converges to zero in probability: $b_{n}\rightarrow _{p}0$ in $%
G^{\ast },$ or almost surely: $b_{n}\rightarrow _{a.s.}0$ in $G^{\ast }$ if
for any finite set $\psi _{1},...\psi _{v}\in G$ the random vectors (($%
b_{n},\psi _{1}),...(b_{n},\psi _{v}))\rightarrow _{p}(0,...,0)$ or
correspondingly, $\Pr ($(($b_{n},\psi _{1}),...(b_{n},\psi _{v}))\rightarrow
(0,...,0))=1.$

Similarly, convergence in distribution of generalized random processes $%
b_{n}\Rightarrow _{d}b$ is defined by the convergence of all multivariate
distributions for random vectors (($b_{n},\psi _{1}),...(b_{n},\psi
_{v}))\rightarrow _{d}$ $((b,\psi _{1})...(b,\psi _{v}))$ for any set $\psi
_{1},...,\psi _{v}\in G.$

\textbf{Remark 1. }\textit{(a) If }$b_{n}-b\rightarrow _{p}0$\textit{\ in }$%
S^{\ast }$\textit{\ then }$Ft(b_{n})-Ft(b)\rightarrow _{p}0$\textit{\ in }$%
S^{\ast }$\textit{\ and }$Ft^{-1}(b_{n})-Ft^{-1}(b)\rightarrow _{p}0$\textit{%
\ in }$S^{\ast }.$\textit{\ Indeed, for any set }$\psi _{1},...\psi _{v}\in
S $\textit{\ }%
\begin{eqnarray*}
&&((Ft(b_{n})-Ft(b),\psi _{1}),...,(Ft(b_{n})-Ft(b),\psi _{v})) \\
&=&((b_{n}-b,Ft(\psi _{1})),...,(b_{n}-b,Ft(\psi _{v})))
\end{eqnarray*}%
\textit{and since the set }$Ft(\psi _{1}),...,Ft(\psi _{v})\in S$\textit{\
then} $((b_{n}-b,Ft(\psi _{1})),...,(b_{n}-b,Ft(\psi _{v})))\rightarrow
_{p}0.$ \textit{Similarly for }$Ft^{-1}.$

\textit{(b) If }$\mu \in G$\textit{\ and }$b_{n}-b\rightarrow _{p}0$\textit{%
\ in }$G^{\ast },$\textit{\ then }$b_{n}\mu -b\mu \rightarrow _{p}0$\textit{%
\ in }$G^{\ast }.$\textit{\ This follows similarly from the fact that }$\mu
\psi \in G$\textit{\ for }$\psi \in G.$

\textit{(c) Parts (a) and (b) of this Remark also hold with }$\rightarrow
_{a.s.}$\textit{\ replacing }$\rightarrow _{p}$\textit{\ and with
convergence to zero replaced by convergence in distribution to a limit
generalized random process.}

\section{Consistent estimation of solutions to stochastic equations}

Suppose that the known functions, $w$ or $w_{1},w_{2k}$ (equivalently, $%
\varepsilon $ or $\varepsilon _{1},\varepsilon _{2k})$ are consistently
estimated in $S^{\ast }$. In the models discussed here these functions are
density functions or conditional mean functions; typically these are
ordinary functions for which commonly used nonparametric estimators are
shown to be consistent pointwise or in some norm (uniform or $L_{1},$ say)
under some sufficient conditions. If the functions and the estimators can be
represented as elements in $S^{\ast }$ then convergence in many common norms
implies convergence in the weaker topology of $S^{\ast },$ so that such
consistent estimators are consistent in $S^{\ast }$ and the following
discussion of consistency of the solutions to convolution equations
considered here applies. However, consistency in $S^{\ast }$ applies more
widely than the usual consistency results. For example, as shown in
Zinde-Walsh, 2008, kernel estimators of density converge as stochastic
generalized functions even when they diverge pointwise (e.g. at mass points,
or for fractal measures).

For the equations considered here, when identification and well-posedness
holds consistent estimation of the known functions can provide consistent
plug-in estimators for the solutions as long as the estimators all be in the
classes over which well-posedness obtains.

The fact that well-posedness implies consistency of some plug-in estimators
holds generally. Denote (generically) the known functions by $%
w_{1},...w_{M}, $ their consistent in topology of $S^{\ast }$ estimators by $%
\hat{w}_{i};$ denote the solutions by $g_{1},...,g_{K}$ and the plug-in
estimators from solving the equations using $\hat{w}_{i}$ by $\hat{g}_{j}.$
If the estimators, $\hat{w}_{i},$ are in classes where well-posedness holds,
then for any set of $\left( \psi _{1},...\psi _{v}\right) $ from $S$ and any
neighborhood $N_{vK}(0)$ of zero in real or complex $vK-$ dimensional
Euclidean space there is a neighborhood of zero, $\ \tilde{N}_{vM}(0)$ in
the corresponding $vM-$dimensional space such that event $E_{w}=\left\{
(\left( \hat{w}_{1},\psi _{1}\right) ,...,\left( \hat{w}_{1},\psi
_{v}\right) ,\left( \hat{w}_{2},\psi _{1}\right) ,...,\left( \hat{w}%
_{M},\psi _{v}\right) )^{\prime }\in \tilde{N}_{vM}\left( 0\right) \right\} $
by well-posedness implies the event 
\begin{equation*}
E_{g}=\left\{ (\left( \hat{g}_{1},\psi _{1}\right) ,...,\left( \hat{g}%
_{1},\psi _{v}\right) ,\left( \hat{g}_{2},\psi _{1}\right) ,...,\left( \hat{g%
}_{K},\psi _{v}\right) )^{\prime }\in N_{vK}\left( 0\right) \right\} .
\end{equation*}%
Consistency of $\hat{w}_{i}$ as $n\rightarrow \infty $ means that for large
enough $n$ probability of $E_{w}$ can be arbitrarily close to 1, thus
probability of $E_{g}$ is as close or closer to 1. Thus the condition for
consistent plug-in estimation is that the estimators are in the classes of
generalized functions that provide well-posedness of solutions with
probability approaching 1.

\subsection{Consistency of deconvolution: solving $\left( \protect\ref{eq1}%
\right) $}

The well-posedness condition directly applies to the measurement error
deconvolution problem where $\phi $ is known and the conditions apply only
to estimators of observables. The requirement of boundedness of the sequence
of the estimators in the space $S^{\ast }$ would follow if the norms of the
functions were bounded in probability; $L_{p}$ are the spaces typically used
in the literature (e.g. Fan, 1991; Carrasco and Florens, 2010). But here
more generally consistency of plug-in deconvolution holds for any
probability distribution; the only important restriction is on $\phi ^{-1}$
to satisfy $\left( \ref{conds}\right) ;$ that is for the measurement error
not to be supersmooth.

Moreover, the functions in $\left( \ref{conv}\right) $ and the corresponding 
$\left( \ref{eq1}\right) $ need not be generalized densities and
characteristic functions; the conditions for consistent estimation follow
from those for well-posedness and are essentially that the two functions
belong to a convolution pair (equivalently, Fourier transforms to a product
pair) and that the known function be continuous and $\phi ^{-1}$ satisfy $%
\left( \ref{conds}\right) .$

For example, consider in the univariate case a generalized density function, 
$w.$ One can use empirical distribution functions to estimate the
corresponding distribution function, then the estimator of generalized
density by generalized derivatives provides as an estimator the sum of
delta-functions: $\hat{w}_{n}=\frac{1}{n}\tsum\limits_{j=1}^{n}\delta
(x_{j}),$ where $\left( \delta (x_{j}),\psi \right) =\psi (x_{j}).$ Then the
corresponding Fourier transform $\hat{\varepsilon}_{n}=Ft(\hat{w}_{n})$ is
given by a continuous function $\hat{\varepsilon}_{n}(s)=\frac{1}{n}%
\tsum\limits_{j=1}^{n}e^{is^{T}x_{j}}.$ Deconvolution using the
deconvolution kernel (as e.g. in An and Hu, 2012) is applied when the
density of the mismeasured variable is assumed to exist and belong in some $%
L_{p},$ the deconvolution kernel incorporates spectral cut-off by employing
estimators of the form $\tilde{\varepsilon}_{n}=\hat{\varepsilon}%
_{n}(s)I(\left\vert s\right\vert <T_{n}),$ where $T_{n}$ goes to $\infty $
as $n\rightarrow \infty $ at some rate; the indicator function allows for
smoothing in the inverse Fourier transform.

By the results here consistency in $S^{\ast }$ is ensured for both
estimators. Indeed since empirical distribution converges uniformly in
probability to the distribution function, it thus converges in $S^{\ast },$
then the generalized derivatives also converge in $S^{\ast }$ and by
continuity of the Fourier transform estimators $\hat{\varepsilon}_{n}$ and
for $T_{n}\rightarrow \infty $ also $\tilde{\varepsilon}_{n}$ converge to $%
\varepsilon $ in probability. All these functions are continuous and bounded
and only $(\ref{conds})$ for $\phi ^{-1}$ is needed for consistency of the
deconvolution solution in $S^{\ast }$ and the spectral cut-off is not
required. The following remark summarizes this result.

\textbf{Remark 2.} \textit{If the functions in equation }$\left( \ref{conv}%
\right) $\textit{\ are all generalized density functions, the known
characteristic function }$\phi $\textit{\ is such that }$\phi ^{-1}$\textit{%
\ satisfies }$(\ref{conds}),$\textit{\ }$\left\{ z_{i}\right\} _{i=1}^{n}$%
\textit{\ is a random sample from the distribution with the generalized
density }$w,$\textit{\ the deconvolution estimator }%
\begin{equation*}
\hat{g}_{n}=Ft^{-1}\left( \phi ^{-1}\hat{\varepsilon}_{n}\right) \text{
where }\hat{\varepsilon}_{n}(s)=\frac{1}{n}\tsum%
\limits_{j=1}^{n}e^{is^{T}z_{j}}
\end{equation*}%
\textit{is consistent in }$S^{\ast }:$\textit{\ for any finite set }$\psi
_{1},...\psi _{v}\in S$\textit{\ \ the random vector }$((\hat{g}_{n}-g,\psi
_{1}),...,\left( \hat{g}_{n}-g,\psi _{v}\right) )^{T}\rightarrow _{p}0.$

Regularization with suitable spectral cut-off in the case of supersmooth
error distributions typically provides a sequence of estimators that will
converge in norm, albeit slowly for supersmooth distributions (Fan, 1991);
convergence to the limit in the normed space implies convergence in the
topology of $S^{\ast }$.\thinspace\ However, if $\phi $ is not supersmooth
the class of generalized functions where consistent estimation holds does
not include all probability distributions. Indeed spectral cut-off
estimation provides generalized densities that are a limit of inverse
Fourier transforms of truncated generalized functions. Schwartz (1964,
pp.271-273) gives a characterization of any function in $S^{\ast }$ with
Fourier transform that has compact support (in a cube $\left\vert
z_{k}\right\vert <C,k=1,...d$) based on Wiener-Paley theorem. Such a
function is a continuous function $g$ that can be extended to a entire
analytic function $G$ of a complex argument and is of exponential type $\leq
2C$, meaning

\begin{equation*}
\underset{|z|\rightarrow \infty }{lim}\frac{log|G(z)|}{\left\vert
z_{1}\right\vert +...+\left\vert z_{d}\right\vert }\leq 2C.
\end{equation*}%
Thus as long as $g$ is such a function or a limit of such functions it can
be expressed via the regularized solution. As in Schwartz the subspace of
all functions of exponential type (for any finite C) can also be considered.
However, if $g$ does not belong to this subspace regularized solutions may
not converge to it even in $S^{\ast }.$

\subsection{Consistency of estimated solutions to the system of equations $%
\left( \protect\ref{newey2}\right) $ and $\left( \protect\ref{eq2}\right) $}

Establishing consistency for the system of equations is more complicated
since consistency requires not only conditions on estimators of known
functions of the observables, $\varepsilon _{\cdot },$ but also verification
that the resulting estimators of $\phi $ satisfy the conditions of
well-posedness in Theorem 5.

The next Theorem gives conditions for a consistent plug-in nonparametric
estimator for the model $\left( \ref{a}-\ref{c}\right) $ that leads to the
system $\left( \ref{newey2}\right) $ and thus (\ref{eq2}) with continuously
differentiable $\phi .$

Semiparametric generalized method of moments estimation of this model for a
class of regression functions that includes functions in $L_{1}(R^{d})$ was
proposed by Wang and Hsiao (2010); semiparametric estimation was also
discussed for somewhat different classes of univariate parametric regression
functions in Schennach (2007) and Zinde-Walsh (2009). Semiparametric
estimation of polynomial regression is in Hausman, Newey, Ichimura and
Powell (1991).

Start by formally stating the assumptions.

\textbf{Assumption 4 (model)}. \textit{In the model }$\left( \ref{a}-\ref{c}%
\right) $\textit{\ in }$R^{d}$\textit{\ the moments of the errors }$%
E(u_{y}|z,u,u_{x})=0;$\textit{\ }$E(u_{x}|z,v)=0;$\textit{\ }$z$\textit{\ is
independent of }$u;$\textit{\ }$g$\textit{\ \ is a generalized function in }$%
S^{\ast }.$

This assumption implies that if the function $g$ is a regular locally
summable function it cannot grow at infinity at a rate faster than some
polynomial rate. This assumption does not exclude the possibility that $g$
has some singularities, for example, it could be a sum of $\delta -$%
functions or a mix of a regular function and some peaks represented by $%
\delta $-functions.

Denote by $f_{z}$ the density of $z.$ Recall that $f$ denoted the
generalized density of $u$ and $\gamma =Ft\left( g\right) ;\phi =Ft\left(
f\right) ,\varepsilon _{\cdot }=Ft(w_{\cdot }).$

\textbf{Assumption 5 (support).}

\textit{(a) supp}$\left( \gamma \right) $\textit{\ is a connected set that
includes }$0$\textit{\ as an interior point;}

(b) \textit{supp}$\left( \phi \right) =R^{d}.$

\textit{(c) supp}$\left( f_{z}\right) =R^{d}.$

Assumption 5(a) would not be satisfied by a polynomial regression function
when support of $\gamma $ consists of one point 0; semiparametric estimation
for that case was provided in Hausman et al (1991). Support assumptions (b)
and (c) are standard; they could be relaxed.

Recall that here $\phi $ is a characteristic function; $\phi (0)=1.$

\textbf{Assumption 6 (generalized functions)}.

\textit{(a) The function }$f_{z}$\textit{\ is continuous and }$\phi $\textit{%
\ belongs to }$O_{M}.$

\textit{(b) The continuous functions }$w_{\cdot }$\textit{\ belong to a
bounded set in }$S^{\ast },$\textit{\ }$S_{0,m}^{\ast }(V);$\textit{\ also, }%
$\phi _{k}^{\prime }$\textit{\ for }$k=1,...,d,$\textit{\ as well as }$\phi
^{-1}$\textit{\ and }$f_{x}^{-1}$ \textit{all belong to }$S_{0,m}^{\ast
}(V). $

\textit{(c) The regression function }$g$\textit{\ is absolutely integrable.}

\textit{(c') The regression function }$g$ \textit{can be represented as a
sum }$g=g_{L1}+g_{g},$\textit{\ where }$g_{L1\text{ }}$\textit{is absolutely
integrable and }$g_{g}$\textit{\ is such that the Fourier transform of }$%
g_{g}$\textit{\ in }$S^{\ast },$\textit{\ }$\gamma _{g}=Ft(g_{g}),$\textit{\
is singular and thus has support set }$\Lambda _{g}$\textit{\ that is
compact and of zero Lebesgue measure; }$\Lambda _{g}$\textit{\ is a proper
subset of supp}$\left( \gamma \right) $\textit{. The generalized function }$%
\gamma _{g}$\textit{\ is such that there exists a deterministic sequence of
regular functions, }$\left( \gamma _{g}\right) _{n}$\textit{\ that converges
to }$\gamma _{g}$\textit{\ in }$S^{\ast }$\textit{\ and such that support of 
}$\left( \gamma _{g}\right) _{n}$\textit{\ is in a compact set }$\Lambda
_{gn};$\textit{\ there exists }$\zeta _{0}>0$\textit{\ such that }$%
\left\vert \left( \gamma _{g}\right) _{n}\right\vert >2\zeta _{0}$\textit{\
and for }$\gamma _{L1}=Ft(g_{L1})$\textit{\ on }$\Lambda _{g}$ $\left\vert
\gamma _{L1}-\left( \gamma _{g}\right) _{n}\right\vert >\zeta _{0}$\textit{.}

Assumptions 6(a) and 6(b) imply that $w_{\cdot }$ can be divided by $f_{z}$
and any generalized function can be divided by $\phi .$ Requiring that $\phi 
$ be in $O_{M}$ is sufficient to ensure that the model leads to equations $%
\left( \ref{newey2}\right) $ in $S^{\ast }.$ More detailed conditions
similar to those employed in Theorem 2 would allow relaxing the infinite
differentiability assumption 6(a). In particular, if $\gamma $ is a
characteristic function Assumption 6(a) for $\phi $ is not needed.

Continuity of $w_{\cdot }$ in 6(b) would follow by properties of convolution
if either $g$ were continuously differentiable, or $f$ were continuous.

In (b) using the same bound on growth, $\ m,$ and the same $V\,\ $\ for all
the functions simplifies exposition without loss of generality. The bounds
could be liberal but are assumed known in the construction of estimators.
The constraint on the $\phi ^{-1}$ restricts the measurement error from
being supersmooth and the constraint on $f_{z}^{-1}$ does not permit fast
decline to zero at infinity for the density of conditioning $z;\,$\ these
would be automatically satisfied if supports were bounded.

Assumption (c) implies that $\gamma $ and therefore $\varepsilon _{1}$ are
continuous functions. Indeed an integrable function has a continuous Fourier
transform, $\gamma ,$ and $\varepsilon _{1}=\gamma \phi $ is continuous
since $\phi $ is a characteristic function and thus continuous.

Assumption 6(c') holds more generally, e.g. if $g$ is a sum of an integrable
function and a polynomial or a $\sin $ or $\cos $ function; in such cases $%
\gamma $ is a sum of a continuous function (that is separated from zero on
bounded sets within its support) with singular functions such as the $\delta
-$function, shifted $\delta -$function and its derivatives. The support
conditions in the assumption imply that the open support set of the
continuous $\gamma _{L1}$ contains $\Lambda _{g}$. The existence of a
function sequence converging in $S^{\ast }$ to $\gamma _{g}$ with the
support properties stated follows from the general properties of generalized
functions; the only substantive condition there is that the values of the
approximating functions $\left( \gamma _{g}\right) _{n}$ be separated from
zero and from the values of $\gamma _{L1}.$ For example, if $g$ included an
additive constant, then $\gamma _{g}$ is a $\delta -$function, and $\left(
\gamma _{g}\right) _{n}$ could be selected as a sequence of step functions.
These approximating functions need not be specified, only their existence is
required for the proof.

The next assumption is on the stochastic properties of the data generating
process, the sampling and on the kernel and bandwidth.

\textbf{Assumption 7.} \textit{Moments of order }$q>d+1$\textit{\ of }$\frac{%
1}{(1+z^{2})^{m}}y,$\textit{\ }$\frac{1}{(1+z^{2})^{m}}xy$\textit{\
conditional on }$z$\textit{\ are bounded; }$\left\{
x_{i},y_{i},z_{i}\right\} _{i=1}^{n}$ is a random sample from $\left\{
x,y,z\right\} ;$ \textit{the kernel }$K$\textit{\ is the indicator function
of the unit sphere; the bandwidth }$h$\textit{\ is such that }$h\rightarrow
0 $\textit{\ and satisfies }$n^{1-\frac{d}{q-1}}h^{d}\rightarrow \infty .$

Note that Assumption 6(b) implies boundedness of first conditional moments
(functions $w_{\cdot })$ of $\frac{1}{(1+z^{2})^{m}}y,$\textit{\ }$\frac{1}{%
(1+z^{2})^{m}}xy.$

Define the estimators (denoting by $\left( x_{k}\right) _{i}$ the $ith$
observation on the $kth$ component of vector $x)$%
\begin{eqnarray}
\hat{w}_{1}\left( z\right) _{n} &=&\frac{\tsum\limits_{i=1}^{n}y_{i}K\left( 
\frac{z_{i}-z}{h}\right) }{\tsum\limits_{i=1}^{n}K\left( \frac{z_{i}-z}{h}%
\right) };  \label{NW} \\
\hat{w}_{2k}\left( z\right) _{n} &=&\frac{\tsum\limits_{i=1}^{n}y_{i}\left(
x_{k}\right) _{i}K\left( \frac{z_{i}-z}{h}\right) }{\tsum\limits_{i=1}^{n}K%
\left( \frac{z_{i}-z}{h}\right) },k=1,...d.  \notag
\end{eqnarray}

By assumption 6(b) the functions $\frac{1}{(1+z^{2})^{m}}w_{1}(z),$\ $\frac{1%
}{(1+z^{2})^{m}}w_{2k}(z)$\ are bounded in absolute value by $V$; by 6(a)
the density of $z,$\ $f_{z}(z),$\ exists and is continuous, then that for $z$%
\ in any closed sphere $S(x,r)\in $supp($f_{z})$\ the ess$\inf f_{z}(z)>0.$\
Together with Assumption 7 this is sufficient to ensure that estimators $%
\frac{1}{(1+z^{2})^{m}}\hat{w}_{1}(z)_{n},$ $\frac{1}{(1+z^{2})^{m}}\hat{w}%
_{2k}(z)_{n}$, of $\frac{1}{(1+z^{2})^{m}}w_{1}(z),$ $\frac{1}{(1+z^{2})^{m}}%
w_{2k}(z),$ with $\hat{w}_{\cdot }(z)_{n}$ computed as $\left( \ref{NW}%
\right) $, with $K$ and $h$ that satisfy assumption 7 converge in
probability uniformly over any compact set (Devroye, 1978). If $m=0$ the
moment condition is a usual condition made for the Nadaraya-Watson estimator
of $w_{\cdot }$; here essentially just the growth of the conditional moment
functions has to be restricted; this provides estimators that converge in
the topology of $S^{\ast }.$ The bound $V$ is assumed known here; more
restrictive assumptions, including in particular differentiability of $%
w_{\cdot }$ could provide a uniform over compact sets rate of convergence
for e.g. asymptotically optimal estimators (e.g. Stone, 1982); then $V$ that
defines $\tilde{w}_{\cdot n}$ could grow with sample size.

Uniform convergence in probability implies that the estimators converge in
probability in the topology of $D^{\ast }.$ If the functions have compact
support this implies convergence in $S^{\ast }$ since on compact support it
coincides with convergence in $D^{\ast },$ but on unbounded support growth
at infinity needs to be controlled for convergence in $S^{\ast }.$ Thus, for
any generic estimator $\hat{w}_{n}\,$ represented by a regular function, $%
\hat{w}(z)_{n},$ of a regular generalized function $w=w\left( z\right) $ in $%
S_{1,m}^{\ast }\left( V\right) $ define a corresponding estimator $\tilde{w}%
_{n}=\tilde{w}\left( z\right) _{n}$ by setting it to $\hat{w}(z)_{n}$ if $|%
\hat{w}(z)_{n}|<V(1+z^{2})^{m},$ and $V(1+z^{2})^{m}$ otherwise.

\textbf{Lemma 3.} \textit{Suppose that the estimator }$\hat{w}_{n}$\textit{\
converges to }$w$\textit{\ in probability uniformly on bounded sets; then }$%
\tilde{w}_{n}$\textit{\ converges to }$w$\textit{\ in probability in }$%
S^{\ast }$\textit{\ and the corresponding Fourier transform }$\hat{%
\varepsilon}_{n}=Ft(\tilde{w}_{n})$\textit{\ converges in probability in }$%
S^{\ast }$\textit{\ to }$\varepsilon =Ft(w).$

\bigskip \textbf{Proof.} Consider a set $\psi _{1},...,\psi _{v}\in S.$ For
any $\zeta >0$ find a compact set $\Lambda $ such that for $\tilde{\psi}$
representing any of $\psi _{1},...,\psi _{v},Ft(\psi _{1}),...,Ft(\psi _{v})$%
\begin{equation*}
\int_{R^{d}\smallsetminus \Lambda }V(1+t^{2})^{m}|\tilde{\psi}(t)|dt<\zeta .
\end{equation*}

Consider $\left\vert \tilde{w}_{n}-w\right\vert \leq \left\vert \hat{w}%
_{n}-w\right\vert I\left( z\in \Lambda \right) +\left\vert \tilde{w}_{n}-%
\hat{w}_{n}\right\vert I\left( z\in \Lambda \right) +\left\vert \tilde{w}%
_{n}-w\right\vert I\left( z\in R^{d}\diagdown \Lambda \right) .$

Then for any $\zeta _{1}$ there exists $N$ such that $\Pr \left( \left\vert
\left( \tilde{w}_{n}-w,\tilde{\psi}\right) \right\vert >\zeta _{1}\right)
\leq $ 
\begin{equation*}
\Pr \left( \underset{\Lambda }{\sup }\left\vert \hat{w}_{n}-w\right\vert
|\int \tilde{\psi}(t)dt|>\zeta _{1}\right) +\Pr \left( z\in \Lambda
:\left\vert \tilde{w}_{n}-\hat{w}_{n}\right\vert |\int \tilde{\psi}%
(t)dt|>\zeta _{1}-2\zeta \right) \leq \zeta _{1},
\end{equation*}%
since the value $\left\vert \left( \tilde{w}_{n}-w,\tilde{\psi}\right)
\right\vert $ outside $\Lambda $ is bonded with certainty by 2$\zeta $ and
the two probabilities on bounded $\Lambda $ can be bounded because of
uniform convergence in probability of the estimators. Recall that for
Fourier transforms $\left( Ft(\tilde{w}_{n}-w),\psi \right) =\left( \hat{w}%
_{n}-\hat{w},Ft(\psi ))\right) .$ Thus convergence in probability in $%
S^{\ast }$ of the estimators of the functions $\hat{w},$ and also of the
Fourier transforms $\varepsilon $ is established.$\blacksquare $

Denote $Ft(\tilde{w}_{\cdot n})$ by $\hat{\varepsilon}_{\cdot n},$ note that
by construction these random generalized functions are infinitely
differentiable since by the assumption on the kernel $K$ the support of $%
\hat{w}_{\cdot n}$ is bounded and thus the Fourier transform is
differentiable. The Theorem below establishes consistency of plug-in
estimators.

\begin{theorem}
\textit{If Assumptions 4-7 are satisfied, then }

\textit{(i) if supp(}$\gamma )$ \textit{is bounded\ the plug-in estimator }%
\begin{equation*}
\hat{\gamma}_{n}=\left[ \exp (-\int_{0}^{s}\tsum_{k=1}^{d}\hat{\varepsilon}%
_{1n}^{-1}(t)(\left( \hat{\varepsilon}_{1n}(t)\right) _{k}^{\prime }-i\hat{%
\varepsilon}_{2kn}(t))dt_{k}\right] \hat{\varepsilon}_{1n}
\end{equation*}%
\textit{is such that it exists with probability going to 1 and }$Ft^{-1}(%
\hat{\gamma}_{n})-g\rightarrow _{p}0$\textit{\ in }$S^{\ast };$

\textit{(ii) generally the estimator }$\hat{\gamma}_{n}=\tilde{\phi}^{-1}%
\hat{\varepsilon}_{1n}$ \textit{with }%
\begin{equation*}
\tilde{\phi}^{-1}=\tilde{\phi}_{n}^{-1}\left( s\right) =\left[ \exp
(-\int_{0}^{s}\tsum_{k=1}^{d}\hat{\varepsilon}_{1n}^{-1}(t)(\left( \hat{%
\varepsilon}_{1n}(t)\right) _{k}^{\prime }-i\hat{\varepsilon}_{2kn}(t))dt_{k}%
\right] ,
\end{equation*}%
\textit{if }%
\begin{equation*}
\left\vert \left[ \exp (-\int_{0}^{s}\tsum_{k=1}^{d}\hat{\varepsilon}%
_{1n}^{-1}(t)(\left( \hat{\varepsilon}_{1n}(t)\right) _{k}^{\prime }-i\hat{%
\varepsilon}_{2kn}(t))dt_{k}\right] \right\vert <V\left( 1+s^{2}\right) ^{m}
\end{equation*}%
\textit{and}%
\begin{equation*}
V\left( 1+s^{2}\right) ^{m}
\end{equation*}%
\textit{otherwise,} \textit{is such that it exists with probability
approaching 1 and }$Ft^{-1}(\hat{\gamma}_{n})-g\rightarrow _{p}0$\textit{\
in }$S^{\ast }.$
\end{theorem}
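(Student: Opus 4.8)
The plan is to reduce the stochastic claim to the deterministic well-posedness of Theorem 5 by way of the abstract principle set out just before Section 5.1: once a solution map is continuous (well-posed), consistent estimation of its inputs forces consistency of its output in the topology of $S^{\ast}$, via the finite-dimensional event translation $E_{w}\Rightarrow E_{g}$. Since $\hat{\gamma}_{n}$ is assembled from $\hat{\varepsilon}_{1n}$, $(\hat{\varepsilon}_{1n})_{k}^{\prime}$ and $\hat{\varepsilon}_{2kn}$ by exactly the operations (division, the line integral producing the factor $\tilde{\phi}_{n}^{-1}$, and multiplication) that recover $\gamma$ from $\varepsilon_{1}$, $(\varepsilon_{1})_{k}^{\prime}$, $\varepsilon_{2k}$ in Theorem 3(b), it suffices to establish (a) that the inputs converge in probability in $S^{\ast}$, and (b) that with probability approaching one the estimated objects lie in the bounded classes for which Theorem 5 supplies continuity of the solution map. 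Continuity of $Ft^{-1}$ in its stochastic form (Remark 1(a)) then upgrades $\hat{\gamma}_{n}-\gamma\rightarrow_{p}0$ to $Ft^{-1}(\hat{\gamma}_{n})-g\rightarrow_{p}0$, which is the conclusion of both parts.

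First I would invoke Lemma 3: under Assumptions 6--7 the truncated Nadaraya--Watson estimators $\tilde{w}_{\cdot n}$ converge in probability in $S^{\ast}$, so their Fourier transforms $\hat{\varepsilon}_{1n},\hat{\varepsilon}_{2kn}$ converge in probability in $S^{\ast}$ to $\varepsilon_{1},\varepsilon_{2k}$; because differentiation is continuous in $S^{\ast}$, the derivatives $(\hat{\varepsilon}_{1n})_{k}^{\prime}$ converge to $(\varepsilon_{1})_{k}^{\prime}$. Moreover, the kernel assumption in Assumption 7 gives bounded support for $\tilde{w}_{\cdot n}$, so each $\hat{\varepsilon}_{\cdot n}$ is an entire, hence infinitely differentiable, function, which legitimises forming $(\hat{\varepsilon}_{1n})_{k}^{\prime}$ and the integrand of the exponent pointwise.

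The central step, and the main obstacle, is to control the random division by $\hat{\varepsilon}_{1n}$ that defines $\hat{\tilde{\varkappa}}_{kn}=\hat{\varepsilon}_{1n}^{-1}\big((\hat{\varepsilon}_{1n})_{k}^{\prime}-i\hat{\varepsilon}_{2kn}\big)$ and to keep the reconstructed factor $\tilde{\phi}_{n}^{-1}$ (playing the role of $\phi_{n}^{-1}$ in Theorem 5, since $\phi$ is itself unknown here) together with $\tilde{\phi}_{n}$ in a common bounded class $\Phi(m,V)$. On $W=\mathrm{supp}(\gamma)$ the true $\varepsilon_{1}=\gamma\phi$ is non-zero and, by Assumption 6(b), $\phi^{-1}\in S_{0,m}^{\ast}(V)=\Phi(m,V)$, so the true $\tilde{\phi}^{-1}=\phi^{-1}$ lies strictly inside this class. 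In part (i), where $W$ is compact, Assumption 6(c) makes $\varepsilon_{1}$ continuous and bounded away from zero on $W$, and uniform convergence on compacta (Lemma 3) then gives, with probability approaching one, invertibility of $\hat{\varepsilon}_{1n}$ on $W$, so $\hat{\gamma}_{n}$ exists and $\tilde{\phi}_{n}^{-1}$ converges to $\phi^{-1}$ in $S^{\ast}$ with no truncation needed. In part (ii), where $\gamma$ may be singular, Assumption 6(c') supplies the decomposition $g=g_{L1}+g_{g}$ with the regular approximants $(\gamma_{g})_{n}$ separated from zero by $2\zeta_{0}$ and from $\gamma_{L1}$ by $\zeta_{0}$ on the compact singular support $\Lambda_{g}$; this is precisely what keeps $\hat{\varepsilon}_{1n}$ away from zero on the region where $\gamma$ concentrates, so the division is valid with probability approaching one. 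The explicit truncation of $\tilde{\phi}_{n}^{-1}$ at $V(1+s^{2})^{m}$ then forces $\tilde{\phi}_{n}^{-1}\in\Phi(m,V)$ unconditionally; since $\phi^{-1}\in\Phi(m,V)$ the truncation is asymptotically inactive on every compact set, so the truncated factor still converges to $\phi^{-1}$.

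With (a) and (b) in place, the hypotheses of the applicable branch of Theorem 5 (branch (a), as $\phi\in\mathcal{O}_{M}$ by Assumption 6(a) and $\gamma\in S^{\ast}$ is otherwise unrestricted) hold for $(\hat{\gamma}_{n},\tilde{\phi}_{n})$ on $W$ with probability approaching one: the pairs lie in a product pair, $\tilde{\phi}_{n}^{-1}$ and $\phi^{-1}$ lie in a common $\Phi(m,V)$, and $\hat{\varepsilon}_{1n}-\varepsilon_{1},\hat{\varepsilon}_{2kn}-\varepsilon_{2k}\rightarrow_{p}0$. Transporting the deterministic continuity of Theorem 5 through the event-translation argument of Section 5 then yields $\hat{\gamma}_{n}-\gamma\rightarrow_{p}0$ in $S^{\ast}$, and a final application of Remark 1(a) gives $Ft^{-1}(\hat{\gamma}_{n})-g\rightarrow_{p}0$ in $S^{\ast}$ in both cases.
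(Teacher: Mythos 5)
Your proposal follows essentially the same route as the paper's proof: Lemma 3 for convergence in probability in $S^{\ast}$ of the inputs $\hat{\varepsilon}_{1n},\hat{\varepsilon}_{2kn}$, a lower bound in probability on $\left\vert \hat{\varepsilon}_{1n}\right\vert$ on compact subsets of supp$(\gamma )$ obtained from Assumption 6(c) or from the approximants $\left( \gamma _{g}\right) _{n}$ of 6(c'), the truncation placing $\tilde{\phi}_{n}^{-1}$ in $\Phi (m,V)$ so that the well-posedness of Theorem 5 applies with probability approaching 1, and finally continuity of $Ft^{-1}$ via Remark 1(a). The only difference is one of detail rather than of strategy: the paper carries out explicitly the uniform-on-compacta bounds for $\hat{\varepsilon}_{1n}^{-1}$, $\hat{\varkappa}_{kn}$, $\hat{\phi}_{n}^{-1}$ and the product, together with the tail estimate over $\Omega \setminus \Lambda $ in Step 3, whereas you delegate these to Theorem 5 and the event-translation principle of Section 5.
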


\begin{proof}
The proof consists of the following steps.

Step 1 considers a compact set inside the support of $\gamma $ and shows
that on such a set consistency in $S^{\ast }$ follows.

Step 2 examines the continuous part of $\gamma ,$ $\gamma _{L1},$ and
convergence is probability of estimators defined on compact sets that
exclude a set containing the support of the singular part.

Combining the results of Step 1 and 2 concludes the proof of (i).

Step 3 considers the case of unbounded support with $\tilde{\phi}_{n}^{-1}$
defined in (ii). On any compact set with this estimator replacing $\hat{\phi}%
_{n}^{-1}$ the results in Steps 1 and 2 hold so consistency on a bounded set
obtains. Consistency on the unbounded support is shown in topology of $%
S^{\ast }$ by selecting for any set of $\psi _{1},...\psi _{v}$ from $S$ the
corresponding compact set defined in Lemma 3 to bound all the functionals in
probability outside of the compact set to prove (ii).

Next, the details are provided.

Step 1. First, $\hat{\varepsilon}_{1n};$ this is a sequence of continuous
functions that converge in probability in $S^{\ast }$ by Lemma 3. Fourier
transforms of functions in $L_{1}$ are uniformly continuous on compact sets.
If Assumption 6(c) holds convergence is to the continuous function $%
\varepsilon _{1}$. Then $\hat{\varepsilon}_{1n}$ converge in probability
pointwise and uniformly on the compact set $\Lambda .$ Then for any $0<\zeta
_{1},\zeta _{2}<1$ we can find $N_{1}\equiv N(\Lambda ,\zeta _{1},\zeta
_{2}) $ such that for $n>N_{1}$%
\begin{equation*}
\Pr (\underset{\Lambda }{\sup }\left\vert \hat{\varepsilon}_{1n}-\varepsilon
_{1}\right\vert >\zeta _{1}^{3})<\zeta _{2}.
\end{equation*}%
Consider $\Lambda $ that satisfies $\underset{\Lambda }{\inf }\left\vert
\varepsilon _{1}\right\vert >0.$ Set $\zeta _{1}\leq \underset{\Lambda }{%
\inf }\left\vert \varepsilon _{1}\right\vert $, then 
\begin{eqnarray}
\Pr (\underset{\Lambda }{\inf }\left\vert \hat{\varepsilon}_{1n}\right\vert
&<&\zeta _{1})<\zeta _{2};\text{ and}  \label{e1inverse} \\
\Pr (\underset{\Lambda }{\sup }\left\vert \hat{\varepsilon}%
_{1n}^{-1}\right\vert &>&\zeta _{1}^{-1})<\zeta _{2}.  \notag
\end{eqnarray}

Under 6(c') for a singular $\gamma _{g}$ there exists a deterministic
sequence of regular functions, $\left( \gamma _{g}\right) _{n}$ that
converges to $\gamma _{g}$\ in $S^{\ast }$\ and such that support of $\left(
\gamma _{g}\right) _{n}$ is in a compact set $\Lambda _{gn}$\ that is a
proper subset of support of $\gamma ;$ it can be selected such that for some
sequence $\zeta _{gn}\rightarrow 0$ the Lebesgue measure of $\Lambda _{gn},$ 
$\lambda \left( \Lambda _{gn}\right) <\zeta _{gn};$ $\Lambda
_{gn_{1}}\subset \Lambda _{gn_{2}}$ for $n_{1}>n_{2}$ and on any $\Lambda
_{gn}$ for some fixed $\zeta _{0}$ we get $\underset{\Lambda _{gn}}{\inf }%
\left\vert \left( \gamma _{g}\right) _{n}\right\vert >\zeta _{0}.$ For the
corresponding sequence $\left( \varepsilon _{g}\right) _{n}=\left( \gamma
_{g}\right) _{n}\phi $ on $\Lambda _{gn}$ positive lower bounds on modulus
exist and are no less than $\zeta _{0}\underset{\Lambda _{gn}}{\inf }%
\left\vert \phi \right\vert $.

Under 6(c') consider the sequence of deterministic functions $\left(
\varepsilon _{g}\right) _{n}=\left( \gamma _{g}\right) _{n}\phi $ and the
difference $\left( \varepsilon _{L1}\right) _{n}=\varepsilon _{1}-\left(
\varepsilon _{g}\right) _{n}.$ This is a sequence of piece-wise
deterministic continuous functions that converges in $S^{\ast }$ to the
continuous function $\varepsilon _{1L1}=\gamma _{L1}\phi .$ Then on a
compact set $\Lambda $ for $\zeta _{1}\leq \min \left( \underset{\Lambda }{%
\inf }\left\vert \varepsilon _{1L1}\right\vert ,\text{ }\zeta _{0}\underset{%
\Lambda _{gn}}{\inf }\left\vert \phi \right\vert \right) $ we can find the
corresponding $N$ so that for $\hat{\varepsilon}_{1n}=\hat{\varepsilon}%
_{1n}-\left( \varepsilon _{g}\right) _{n}+\left( \varepsilon _{g}\right)
_{n} $ 
\begin{equation*}
\underset{\Lambda }{\Pr (\sup }\left\vert \hat{\varepsilon}_{1n}-\left(
\varepsilon _{g}\right) _{n}-\varepsilon _{1L1}\right\vert <\zeta
_{1})<\zeta _{2}.
\end{equation*}

Then 
\begin{equation*}
\Pr (\underset{A}{\inf }\left\vert \hat{\varepsilon}_{1n}\right\vert <\zeta
_{1})<\Pr (\underset{A}{\inf }\left\vert \hat{\varepsilon}_{1n}-\left(
\varepsilon _{g}\right) _{n}\right\vert <\zeta _{1})<\zeta _{2};
\end{equation*}%
and $\left( \ref{e1inverse}\right) $ also holds.

Then with probability approaching 1 on compact $\Lambda $ the continuous
random functions $\hat{\varkappa}_{kn}=\hat{\varepsilon}_{1n}^{-1}(\left( 
\hat{\varepsilon}_{1n}\right) _{k}^{\prime }-i\hat{\varepsilon}_{2kn}),$ $%
k=1,...,d$ are in a bounded set in $S^{\ast },$ and thus $\hat{\phi}%
_{n}^{-1}(s)=\exp (-\int_{0}^{s}\tsum {}_{k=1}^{d}\hat{\varkappa}%
_{k}(t)dt_{k})$ is also in a bounded set, and thus with probability
approaching 1 satisfies the condition for well-posedness of Theorem 5. Then
the estimators $\hat{\gamma}_{n}=\hat{\phi}_{n}^{-1}\varepsilon _{1}$ are
consistent for $\gamma $ in $S^{\ast }$ on the compact set $\Lambda $ where $%
\underset{\Lambda }{\inf }\left\vert \varepsilon _{1L1}\right\vert >0.$

Since $\gamma _{L1}$ is a continuous function its support is an open set. By
Assumption 6(c') the compact support of $\gamma _{g}$ is contained inside
the open support of $\gamma _{L1}.$ If $\underset{\text{supp}\left( \gamma
_{l1}\right) }{\inf }\left\vert \varepsilon _{1L1}\right\vert >\zeta _{1}>0$
this concludes the proof of (i).

Step 2. Otherwise consider the set $\bar{\Lambda}=\cup \Lambda _{gn};$ and
the open set $\Omega =$supp($\gamma )\diagdown \bar{\Lambda};\,$\ then $\hat{%
\varepsilon}_{1n}$ converge to $\varepsilon _{1}$ that is continuous on that
set. Of course under Assumption 6(c) $\Omega =supp(\gamma ).$ Generally $%
\Omega $ is a union of open connected sets and we can proceed by considering
each component in $\Omega .$ Without loss of generality such a component can
be assumed to be a connected open set containing zero as an interior point,
since if it does not contain zero by a shift (which is a continuous
operation in $S^{\ast })$ of an arbitrary interior point into zero this can
be attained.

All the proofs that follow apply to an open set that will be denoted $\Omega 
$ that is connected and contains zero as an interior point.

Consider a compact set $\Lambda \subset \Omega $ where $\underset{\Lambda }{%
\inf }\left\vert \varepsilon _{1}\right\vert >0.$ It follows from the proof
in step 1 under continuity of $\varepsilon _{1}$ that $\Pr \left( \underset{%
\Lambda }{\sup }\left\vert \hat{\varepsilon}_{1n}^{-1}-\varepsilon
_{1}^{-1}\right\vert >\zeta _{1}\right) =$ 
\begin{eqnarray*}
&&\Pr \left( \underset{\Lambda }{\sup }\left\vert \left( \hat{\varepsilon}%
_{1n}^{-1}(\varepsilon _{1}-\hat{\varepsilon}_{1n}\right) \varepsilon
_{1}^{-1}\right\vert >\zeta _{1}\right) \\
&\leq &\Pr \left( \underset{\Lambda }{\sup }\left\vert \left( \hat{%
\varepsilon}_{1n}^{-1}(\varepsilon _{1}-\hat{\varepsilon}_{1n}\right)
\right\vert >\zeta _{1}^{2}\right) \\
&\leq &\Pr \left( \underset{\Lambda }{\sup }\left\vert \hat{\varepsilon}%
_{1n}^{-1}\right\vert >\zeta _{1}^{-1},\underset{\Lambda }{\sup }\left\vert 
\hat{\varepsilon}_{1n}^{-1}(\varepsilon _{1}-\hat{\varepsilon}%
_{1n})\right\vert >\zeta _{1}^{2}\right) \\
&&+\Pr \left( \underset{\Lambda }{\sup }\left\vert \hat{\varepsilon}%
_{1n}^{-1}\right\vert \leq \zeta _{1}^{-1},\underset{\Lambda }{\sup }%
\left\vert \hat{\varepsilon}_{1n}^{-1}(\varepsilon _{1}-\hat{\varepsilon}%
_{1n})\right\vert >\zeta _{1}^{2}\right) \\
&\leq &\Pr \left( \underset{\Lambda }{\sup }\left\vert \hat{\varepsilon}%
_{1n}^{-1}\right\vert >\zeta _{1}^{-1}\right) +\Pr \left( \underset{\Lambda }%
{\sup }\left\vert \varepsilon _{1}-\hat{\varepsilon}_{1n}\right\vert >\zeta
_{1}^{3}\right) \\
&\leq &2\zeta _{2}.
\end{eqnarray*}

Theorem 3(b) implies that $\phi ^{-1}(s)=\exp (-\int_{0}^{s}\tsum
{}_{k=1}^{d}\varkappa _{k}(t)dt_{k}),$ where $\varkappa _{k}$ is the unique
continuous function that solves $\varepsilon _{1}\varkappa -(\left(
\varepsilon _{1}\right) _{k}^{\prime }-i\varepsilon _{2k})=0$ in $S^{\ast }$
on $\Omega .$ By Assumptions 6(a-b) for $\phi $, $\varepsilon _{1}\phi ^{-1}$
exists in $S^{\ast }$and $g=Ft^{-1}(\varepsilon _{1}\phi ^{-1})$ in $S^{\ast
}.$ By Assumption 5a $\varepsilon _{1}=\gamma \phi $ is non-zero on supp($%
\gamma $).

Consider the estimator function $\hat{\varkappa}_{kn}=\hat{\varepsilon}%
_{1n}^{-1}(\left( \hat{\varepsilon}_{1n}\right) _{k}^{\prime }-i\hat{%
\varepsilon}_{2kn})$ on compact $\Lambda \subset \Omega ;$ the function $%
\varepsilon _{1}$ is continuous there and it follows that $\left(
\varepsilon _{1}\right) _{k}^{\prime }-i\varepsilon _{2k}=\varepsilon
_{1}\varkappa _{k}$ is continuous.

The sequences of random functions $(\left( \hat{\varepsilon}_{1n}\right)
_{k}^{\prime }-i\hat{\varepsilon}_{2kn})$ converge in probability uniformly
on $\Lambda $ in $S^{\ast }$ to the continuous function $(\left( \varepsilon
_{1}\right) _{k}^{\prime }-i\varepsilon _{k2}).$ Define $\bar{B}=\underset{%
\Lambda ,k}{\sup }\left\vert \left( \varepsilon _{1}\right) _{k}^{\prime
}-i\varepsilon _{k2}\right\vert .$ For $0<\zeta _{4}$ find $N_{2}$ such that 
\begin{equation*}
\Pr (\underset{\Lambda }{\sup }\left\vert (\left( \hat{\varepsilon}%
_{1n}\right) _{k}^{\prime }-i\hat{\varepsilon}_{2kn})-(\left( \varepsilon
_{1}\right) _{k}^{\prime }-i\varepsilon _{k2})\right\vert >\zeta _{4})<\zeta
_{2}\text{ for }n>N_{2}.
\end{equation*}
Bound $\Pr (\underset{\Lambda }{\sup }\left\vert \hat{\varkappa}%
_{kn}-\varkappa _{k}\right\vert >\zeta _{5})\leq $ 
\begin{equation*}
\Pr (\underset{\Lambda }{\sup }\left\vert \hat{\varepsilon}%
_{1n}^{-1}\right\vert \left\vert (\left( \hat{\varepsilon}_{1n}\right)
_{k}^{\prime }-i\hat{\varepsilon}_{2n}-\left( \varepsilon _{1}\right)
_{k}^{\prime }+i\varepsilon _{2}\right\vert +\underset{\Lambda }{\sup }%
\left\vert \hat{\varepsilon}_{1n}^{-1}-\varepsilon _{1}^{-1}\right\vert
\left\vert \left( \varepsilon _{1}\right) _{k}^{\prime }-i\varepsilon
_{k2}\right\vert >\zeta _{5})\leq 
\end{equation*}%
\begin{eqnarray*}
\Pr (\underset{\Lambda }{\sup }\left\vert \hat{\varepsilon}%
_{1n}^{-1}\right\vert  &>&\zeta _{1}^{-1})+\Pr \left( \underset{\Lambda }{%
\sup }\left\vert (\left( \hat{\varepsilon}_{1n}\right) _{k}^{\prime }-i\hat{%
\varepsilon}_{2kn}-\left( \varepsilon _{1}\right) _{k}^{\prime
}+i\varepsilon _{k2}\right\vert >\zeta _{5}\zeta _{1}\right)  \\
+\Pr (\underset{\Lambda }{\sup }\left\vert \hat{\varepsilon}%
_{1n}^{-1}-\varepsilon _{1}^{-1}\right\vert  &>&\zeta _{5}/\bar{B}).
\end{eqnarray*}%
If $\zeta _{5}=\min \left\{ \zeta _{1}\bar{B},\zeta _{4}/\zeta _{1}\right\} $
the probability as $n>\max \left\{ N_{1},N_{2}\right\} $ is less than $%
4\zeta _{2}.$

Then $\Pr (\underset{\Lambda }{\sup }\left\vert \int_{0}^{s}\tsum
{}_{k=1}^{d}\hat{\varkappa}_{kn}(t)dt_{k}-\int_{0}^{s}\tsum
{}_{k=1}^{d}\varkappa _{k}(t)dt_{k}\right\vert >\zeta _{6})\leq $ 
\begin{equation*}
\Pr (\underset{\Lambda }{\sup }\int_{0}^{s}\tsum {}_{k=1}^{d}|\hat{\varkappa}%
_{kn}(t)-\varkappa _{k}(t)|dt_{k}>\zeta _{6})\leq \Pr \left( \underset{%
\Lambda }{\sup }\left\vert \hat{\varkappa}_{kn}(t)-\varkappa
_{k}(t)\right\vert >\zeta _{6}/\mu (\Lambda )\right) ,
\end{equation*}%
where $\mu (\Lambda )$ is the measure of the compact set $\Lambda .$ For $%
\zeta _{6}=\mu (\Lambda )\zeta _{5}$ then the probability is less than $%
4\zeta _{2}.$

Consider now on $\Lambda $ the function $\hat{\phi}_{n}^{-1}(s)=\exp
(-\int_{0}^{s}\tsum {}_{k=1}^{d}\hat{\varkappa}_{kn}(t)dt_{k}).$ Define $%
\tilde{B}=\underset{\Lambda }{\sup }V\left( 1+s^{2}\right) ^{m};$ then $%
\underset{\Lambda }{\sup }\left\vert \phi ^{-1}(s)\right\vert <\tilde{B}.$

Then $\Pr (\underset{\Lambda }{\sup }\left\vert \hat{\phi}_{n}^{-1}-\phi
^{-1}\right\vert >\zeta _{7})\leq $ 
\begin{equation*}
\Pr (\underset{\Lambda }{\sup }\left\vert \int_{0}^{s}\tsum {}_{k=1}^{d}\hat{%
\varkappa}_{kn}(t)dt_{k}-\int_{0}^{s}\tsum {}_{k=1}^{d}\varkappa
_{k}(t)dt_{k}\right\vert >\ln (1+\tilde{B}^{-1}\zeta _{7})),
\end{equation*}%
and is smaller than $4\zeta _{2}$ for $\zeta _{7}=\ln (1+\tilde{B}^{-1}\zeta
_{6}).$

Since for the continuous functions $\underset{\Lambda }{\sup }\left\vert
\phi _{n}^{-1}\hat{\varepsilon}_{1n}-\phi ^{-1}\varepsilon _{1}\right\vert <$
\begin{equation*}
\tilde{B}\underset{\Lambda }{\sup }\left\vert \varepsilon _{1}-\hat{%
\varepsilon}_{1n}\right\vert +\underset{\Lambda }{\sup }\left\vert
\varepsilon _{1}\right\vert \underset{\Lambda }{\sup }\left\vert \phi
_{n}^{-1}-\phi ^{-1}\right\vert 
\end{equation*}%
by similar derivations 
\begin{equation*}
\Pr \left( \underset{\Lambda }{\sup }\left\vert \hat{\phi}_{n}^{-1}\hat{%
\varepsilon}_{1n}-\phi ^{-1}\varepsilon _{1}\right\vert >\zeta _{8}\right)
<5\zeta _{2}
\end{equation*}
if $\zeta _{8}<\min \{\tilde{B}^{-1}\zeta _{1},\zeta _{7}(\underset{\Lambda }%
{\sup }\left\vert \varepsilon _{1}\right\vert )^{-1}.$

If $\Omega $ is bounded then by Assumption 5 $\underset{\Omega }{\sup }%
\left\vert \phi ^{-1}(s)\right\vert $ is uniformly bounded and since $\hat{%
\phi}_{n}^{-1}$ converges to $\phi ^{-1}$ in probability uniformly on any
compact set inside $\Omega $ then also $\hat{\phi}_{n}^{-1}$ is bounded away
from zero and then $\left\vert \hat{\phi}_{n}^{-1}\hat{\varepsilon}%
_{1n}-\phi ^{-1}\varepsilon _{1}\right\vert $ converges in probability to
zero on $\Omega .$ This concludes the proof of (i)$.$

Step 3. If supp($\gamma )$ is unbounded consider $\tilde{\phi}_{n}^{-1}$
defined in (ii). From the proof in step 1 it follows that for large enough $%
N $ the estimator $\tilde{\phi}_{n}^{-1}=\hat{\phi}_{n}^{-1}$ on any compact 
$\Lambda $ with arbitrarily high probability and then $\hat{\varepsilon}_{1n}%
\tilde{\phi}_{n}^{-1}$ converges to $\gamma $ on $\Lambda $ in probability
in $S^{\ast }$.

Consider an arbitrary set $\psi _{1},...,\psi _{v}\in S$ and the
corresponding compact set $\Lambda $ defined by Lemma 3 and show that $\Pr
(\left\vert (\tilde{\gamma}_{n}-\gamma ,\tilde{\psi})\right\vert >\zeta )$
goes to zero.

Since $\hat{\varepsilon}_{1n}-\varepsilon _{1}$ converges to zero in
probability in $S^{\ast }$ by Lemma 3 and since on $\Omega $ this difference
is a continuous function, then also $|\hat{\varepsilon}_{1n}(t)-\varepsilon
_{1}(t)|$ converges to zero in probability in $S^{\ast }$ on $\Omega $. Thus 
\begin{equation*}
\int_{\Omega \diagdown \Lambda }V\left( 1+t^{2}\right) ^{m}|\hat{\varepsilon}%
_{1n}(t)-\varepsilon _{1}(t)|\left\vert \tilde{\psi}(t)\right\vert dt
\end{equation*}%
converges in probability to zero. Then since

\begin{equation*}
\left\vert (\tilde{\gamma}_{n}-\gamma ,\tilde{\psi})\right\vert \leq 
\underset{\Lambda }{\sup }\left\vert \tilde{\phi}_{n}^{-1}\hat{\varepsilon}%
_{1n}-\phi ^{-1}\varepsilon _{1}\right\vert \int_{\Lambda }\left\vert \tilde{%
\psi}(t)\right\vert dt
\end{equation*}%
\begin{eqnarray*}
&&+\int_{\Omega \backslash \Lambda }V\left( 1+t^{2}\right) ^{m}|\varepsilon
_{1}(t)|\left\vert \tilde{\psi}(t)\right\vert dt \\
&&+\int_{\Omega \backslash \Lambda }V\left( 1+t^{2}\right) ^{m}|\hat{%
\varepsilon}_{1n}(t)-\varepsilon _{1}(t)|\left\vert \tilde{\psi}%
(t)\right\vert dt,
\end{eqnarray*}%
it follows that $\Pr (\left\vert (\tilde{\gamma}_{n}-\gamma ,\tilde{\psi}%
)\right\vert >\zeta )\leq $ 
\begin{eqnarray*}
\Pr (\underset{\Lambda }{\sup }\left\vert \hat{\phi}_{n}^{-1}\hat{\varepsilon%
}_{1n}-\phi ^{-1}\varepsilon _{1}\right\vert &>&\left( \int_{\Lambda
}\left\vert \tilde{\psi}(t)\right\vert dt\right) ^{-1}\zeta ) \\
+\Pr \left( \int_{\Omega \backslash \Lambda }V\left( 1+t^{2}\right)
^{m}|\varepsilon _{1}(t)|\left\vert \tilde{\psi}(t)\right\vert dt\right)
&>&\zeta ) \\
+\Pr \left( \int_{\Omega \backslash \Lambda }V\left( 1+t^{2}\right) ^{m}|%
\hat{\varepsilon}_{1n}(t)-\varepsilon _{1}(t)|\left\vert \tilde{\psi}%
(t)\right\vert dt\right) &>&\zeta )
\end{eqnarray*}

Here as shown in the Step 2 the first probability converges to zero, the
second converges to zero by assumption 6 \ on $\varepsilon _{1},$ definition
of the set $\Lambda $ and Lemma 3, and the third by convergence of $\hat{%
\varepsilon}_{1n}.$ Then$\ \tilde{\gamma}_{n}$ converges in probability to $%
\gamma =\phi ^{-1}\varepsilon _{1}$ in $S^{\ast }.$ Taking inverse Fourier
transforms in $S^{\ast }$ concludes the proof.
\end{proof}

The Theorem provides consistency of plug-in estimators for solutions to the
system of equations $\left( \ref{eq2}\right) $ and consequently $\left( \ref%
{newey2}\right) $ in a fairly general set-up. Nevertheless some assumptions
can be further relaxed. Of course, establishing results for a compact
support of the Fourier transforms is much easier and thus using spectral
cut-off can be advantageous especially when high frequency components of the
regression function may be commesurate with the magnitude of the error
components.

Computation of the estimators requires applying Fourier transforms and
inverse Fourier transforms. This can be accomplished with numerical
algorithms. However, it is possible to simplify the estimated $\hat{%
\varepsilon}$. Consider instead of the estimator in $\left( \ref{NW}\right) ,
$ $\hat{w}_{1}\left( z\right) _{n},$ an estimator computed as $%
\tsum\limits_{i=1}^{n}\alpha _{i}^{-1}y_{i}K\frac{z_{i}-z}{h})$ with the
weight $\alpha _{i}=\tsum\limits_{j\neq i}^{n}K(\frac{z_{j}-z_{i}}{h})$
replacing the $z-$dependent weight in $\hat{w}_{1}\left( z\right) _{n}$ and
similar estimators for $\hat{w}_{2k}\left( z\right) _{n}.$ Then the
corresponding Fourier transform $\hat{\varepsilon}_{1n}(s)$ can be expressed
as $\tsum\limits_{i=1}^{n}\alpha _{i}^{-1}y_{i}e^{is^{T}z_{j}}$\textit{sinc}$%
\left( \frac{s^{T}z_{j}}{\pi }\right) ,$ where by definition \textit{sinc}$%
\left( x\right) =\frac{\sin \pi x}{\pi x},$ and similar expressions for $%
\hat{\varepsilon}_{2kn}.$ Further computation for the estimators would have
to be done numerically.

\section{Conclusion}

This paper was devoted to treating a single convolution equation and a
specific system of convolution equations; many statistical models with
various independence conditions give rise to such equations; measurement
error is emphasized here, but equations of this type are also applicable in
other models, such as factor models and panel data models; many examples are
presented in Zinde-Walsh (2012). The results of this paper indicate
conditions for identification and well-posedness when casting these
equations in terms of generalized functions; the generalized functions
approach enlarges the area of applicability of the models.

\end{document}